\documentclass[a4paper,reqno]{amsart}

\pdfoutput=1

\usepackage{amssymb,amsmath,amsthm,mathrsfs,graphicx,subcaption,geometry}
\usepackage[all]{xy} 
\usepackage{pstricks,pstricks-add} 
\usepackage{epsfig}
\usepackage{hyperref}
\captionsetup[subfigure]{labelfont=rm}
\newtheorem{mthm}{Main Theorem}

\newtheorem{thm}{Theorem}[section]
 \newtheorem{cor}[thm]{Corollary}
 \newtheorem{lem}[thm]{Lemma}
 \newtheorem{prop}[thm]{Proposition}
 
 \theoremstyle{definition}
 \newtheorem{defn}[thm]{Definition}
 \theoremstyle{remark}
 \newtheorem{rem}[thm]{Remark}
 
 \newtheorem{ex}[thm]{Example}
 \newcommand{\C}{\mathbb{C} }
 \newcommand{\CC}{\widehat{\C} }
 \newcommand{\R}{\mathbb{R} }
 
 \newcommand{\Z}{\mathbb{Z} }

 \newcommand{\mate}{\perp \! \! \! \perp}

\begin{document}
 
 \title[Obstructions for cubic Thurston maps with two fixed critical points]{Matings of Cubic polynomials with a fixed critical point, Part I: Thurston Obstructions}
\author{Thomas Sharland}
\address{Department of Mathematics, University of Rhode Island, Kingston, Rhode Island, USA}
\email{tsharland@uri.edu}
\subjclass[2010]{Primary 37F10}
\date{\today}
 
 \begin{abstract}
 We prove that if $F$ is a degree $3$ Thurston map with two fixed critical points, then any irreducible obstruction for $F$ contains a Levy cycle. As a corollary, it will be shown that if $f$ and $g$ are two postcritically finite cubic polynomials each having a fixed critical point, then any obstruction to the mating $f \mate g$ contains a Levy cycle. We end with an appendix to show examples of the obstructions described in the paper.
\end{abstract}
\maketitle
 
 \section{Introduction} 
 The study of branched coverings of the sphere is, by virtue of Thurston's theorem, an important area in the field of holomorphic dynamics. Thurston's Theorem gives a combinatorial criterion for checking when a postcritically finite branched covering is \emph{equivalent} to a rational map on the Riemann sphere. This condition requires the checking of collections of curves (known as multicurves) in the complement of the postcritical set to see if they form an \emph{obstruction}. However, in general it is very hard to know what these obstructions may look like, and so often one must restrict to particular families of postcritically finite branched coverings to investigate what kind of obstructions may occur.
 
 In the quadratic (or more generally, bicritical) case, it was shown through work of Rees, Shishikura and Tan \cite{Tan:quadmat} that any obstruction contained a \emph{Levy cycle} (see Definition~\ref{d:Levy}). However, Shishikura and Tan \cite{ShishTan} showed that there exist obstructions for postcritically finite cubic branched coverings which were not Levy cycles, suggesting that the degree $3$ (and higher) case is more complicated. On the other hand, Tan, whilst studying the family of cubic Newton maps, \cite{Tan:Newton} showed that any obstruction for a postcritically finite cubic branched covering with three fixed critical points contains a Levy cycle. This present paper strengthens this latter result, showing the result holds in the case that the branched covering has only two fixed critical points.
 
 One particular way of constructing a branched covering of the sphere is to form the \emph{mating} of two monic polynomials of the same degree. Informally, one may think of the mating operation as gluing together the dynamics of the two polynomials to construct a branched covering. In the postcritically finite case, one may then check, via Thurston's theorem, if this branched covering is equivalent to a rational map on the Riemann sphere. The results cited for general postcritically finite branched coverings in the previous paragraph apply in particular for matings. Furthermore, building on Tan's work in \cite{Tan:Newton}, Aspenberg and Roesch \cite{AspRoe} studied matings of polynomials (not necessarily postcritically finite) in the family of cubic Newton maps. However, these maps still had three fixed critical points, forcing one of the polynomials in the mating to be the ``double basilica''\footnote{In this context, the \emph{double basilica} is the unique, up to conjugacy, cubic polynomial with two fixed critical points}. The present paper in concerned with cubic postcritically finite branched coverings with two fixed critical points. The sequel to this paper will concentrate on the matings of postcritically finite monic cubic polynomials each possessing a fixed critical point. The branched coverings formed by the mating of two such polynomials are then cubic Thurston maps with two fixed critical points.

  The author is grateful to the anonymous referee for suggestions and remarks which helped improve the article.

 \subsection{Thurston's Theorem}
 
 Let $F \colon \Sigma \to \Sigma$ be an orientation-preserving branched self-covering of a topological $2$-sphere. We denote by $\Omega_F$ the critical set of $F$ and define
\[
 P_F = \bigcup_{n > 0} F^{n}(\Omega_F)
\]
to be the postcritical set of $F$. We say that $F$ is postcritically finite if $|P_F| < \infty$. Following \cite{BonkMeyer}, we call $F \colon \Sigma \to \Sigma$ a \emph{Thurston map} if it is a postcritically finite orientation-preserving branched self-covering of a topological $2$-sphere.
 
 \begin{defn}\label{d:Thurst}
Let $F \colon \Sigma \to \Sigma$ and $\widehat{F} \colon \widehat{\Sigma} \to \widehat{\Sigma}$ be Thurston maps. An \emph{equivalence} is given by a pair of orientation-preserving homeomorphisms $(\Phi,\Psi)$ from $\Sigma$ to $\widehat{\Sigma}$ such that 
    \begin{itemize}
     \item{$\Phi |_{P_{F}} = \Psi |_{P_{F}}$}
     \item{The following diagram commutes:
        \[
             \xymatrix{       (\Sigma,P_F) \ar[r]^{\Psi} \ar[d]_{F}    & (\widehat{\Sigma},P_{\widehat{F}}) \ar[d]^{\widehat{F}}
            \\ 
                (\Sigma,P_F) \ar[r]_{\Phi}                       & (\widehat{\Sigma},P_{\widehat{F}}) }
        \]}
    \item{$\Phi$ and $\Psi$ are isotopic via a family of homeomorphisms $t \mapsto \Phi_{t}$ which is constant on $P_F$.}
        \end{itemize}
\end{defn}

If there exists an equivalence as above, we say that $F$ and $\widehat{F}$ are equivalent. Note that in particular, a postcritically finite rational map $R \colon \CC \to \CC$ on the Riemann sphere is a Thurston map. Hence it is natural to ask when a general Thurston map is equivalent to a rational map.

\begin{defn}
Let $F$ be a Thurston map. A \emph{multicurve} is a collection $\Gamma = \{ \gamma_1, \ldots,\gamma_n \}$ of simple, closed,  non-peripheral\footnote{A simple closed curve $\gamma$ is non-peripheral if each component of $\Sigma \setminus \gamma$ contains at least two elements of $P_F$.} curves such that each $\gamma_i \in \Gamma$ is disjoint from each other $\gamma_j$ and the $\gamma_i$ are pairwise non-homotopic relative to $P_F$. A multicurve is called $F$-\emph{stable} if for all $\gamma_{i} \in \Gamma$, all the non-peripheral components of $F^{-1}(\gamma_{i})$ are homotopic relative to $P_F$ to elements of $\Gamma$. If $\Gamma$ is a multicurve (not necessarily $F$-stable), we define the non-negative matrix $F_\Gamma = (f_{ij})_{n \times n}$ as follows. Let $\gamma_{i,j,\alpha}$ be the components  of $F^{-1}(\gamma_{j})$ which are homotopic to $\gamma_{i} \in \Gamma$ in $\Sigma \setminus P_{F}$. Now define
\[
	F_{\Gamma}(\gamma_{j}) = \sum_{i,\alpha} \frac{1}{\deg F |_{\gamma_{i,j,\alpha}} \colon \gamma_{i,j,\alpha} \to \gamma_{j}} \gamma_{i}.
\] 
where $\deg$ denotes the degree of the map. By standard results on non-negative matrices (see \cite{Gantmacher}), this matrix $(f_{ij})$ will have a leading non-negative eigenvalue $\lambda$. We write $\lambda(\Gamma)$ for the leading eigenvalue associated to the multicurve $\Gamma$.
\end{defn}

The importance of the above is due to the following rigidity theorem. A proof can be found in \cite{DouadyHubbard:Thurston}.

 \begin{thm}[Thurston]\mbox{}
\begin{enumerate}
  \item{A Thurston map $F \colon \Sigma \to \Sigma$ of degree $d \geq 2$ with hyperbolic orbifold is equivalent to a rational map $R \colon \CC \to \CC$ if and only if there are no $F$-stable multicurves with $\lambda(\Gamma) \geq 1$.}
  \item{Any Thurston equivalence of rational maps $F$ and $\widehat{F}$ with hyperbolic orbifolds is represented by a M\"{o}bius conjugacy.}
\end{enumerate}
\end{thm}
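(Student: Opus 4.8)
The plan is to give the Teichm\"{u}ller-theoretic proof via the Thurston pullback map, treating part (1) as a fixed-point problem and part (2) as a uniqueness statement. Let $\mathcal{T} = \mathcal{T}(\Sigma, P_F)$ denote the Teichm\"{u}ller space of the sphere $\Sigma$ marked by the finite set $P_F$, with its Teichm\"{u}ller (equivalently Kobayashi) metric $d_{\mathcal{T}}$; the cotangent space at a point is the space of integrable meromorphic quadratic differentials on the underlying Riemann surface with at worst simple poles along $P_F$, normed by $\|q\| = \int|q|$. The branched covering $F$ induces a holomorphic map $\sigma_F \colon \mathcal{T} \to \mathcal{T}$, the \emph{Thurston pullback map}, sending a marked complex structure to its $F$-pullback, and the key correspondence is the following: a fixed point of $\sigma_F$ is exactly a marked Riemann sphere on which $F$ is realised, up to the equivalence of Definition~\ref{d:Thurst}, by a rational map, well defined up to M\"{o}bius conjugacy, and distinct fixed points give non-conjugate rational maps. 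The two analytic inputs are: (i) the coderivative of $\sigma_F$ is the pushforward $q \mapsto F_{*}q$ of quadratic differentials, so $\|D\sigma_F^{*}\| \le 1$ and $\sigma_F$ is $1$-Lipschitz for $d_{\mathcal{T}}$; and (ii) when the orbifold of $F$ is hyperbolic this pushforward is \emph{strictly} norm-decreasing on every nonzero differential, so $\sigma_F$ is never a local isometry. Facts (i)--(ii) are where the hyperbolic-orbifold hypothesis enters.

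\emph{Part (1), ``if''.} Assume no $F$-stable multicurve $\Gamma$ has $\lambda(\Gamma)\ge 1$; I produce a fixed point of $\sigma_F$. Fix a basepoint and set $\tau_n = \sigma_F^{\,n}(\tau_0)$. If the image of $\{\tau_n\}$ in moduli space lies in a compact set, then, using (i) and (ii), the orbit approaches a fixed point: the distances $a_n := d_{\mathcal{T}}(\tau_n,\tau_{n+1})$ are non-increasing by (i), so $a_n\downarrow c$; were $c>0$, a subsequential limit of the segments $[\tau_n,\tau_{n+1}]$ would be a $\sigma_F$-invariant geodesic along which $\sigma_F$ acts as an isometry, contradicting (ii); hence $c=0$ and any limit point of $(\tau_n)$ is fixed. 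It remains to show that failure of this compactness forces an obstruction. If $(\tau_n)$ leaves every compact set, then by Mumford's criterion, along a subsequence some simple closed $\tau_n$-geodesic has length below the Margulis constant; the set $\Gamma_n$ of such short curves is a multicurve, and as $P_F$ is finite only finitely many isotopy types of multicurves occur, so one type $\Gamma$ recurs along a further subsequence. Two facts then close the argument. First, a curve very short at $\tau_{n+1}=\sigma_F(\tau_n)$ maps under $F$ to a curve short at $\tau_n$ --- since $F$ has degree $d$, it shrinks the modulus of a wide embedded annulus by at most the factor $d$ --- so the classes that degenerate along the orbit are essentially $F^{-1}$-stable and, after passing to an irreducible stable sub-multicurve, yield an $F$-stable $\Gamma$. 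Second, an annulus of modulus $w$ around $\gamma_j$ at $\tau_n$ pulls back to disjoint annuli around the components $\gamma_{k,j,\alpha}$ of $F^{-1}(\gamma_j)$ with moduli $w/\deg(F|_{\gamma_{k,j,\alpha}})$, so by Gr\"{o}tzsch's inequality the vectors $w^{(n)}$ of extremal moduli of $\Gamma$ satisfy $w^{(n+1)}\ge F_\Gamma\,w^{(n)}$ coordinatewise. Since $\tau_n\to\partial$ forces $\|w^{(n)}\|\to\infty$, iterating this inequality is incompatible with $\lambda(\Gamma)<1$, so $\lambda(\Gamma)\ge 1$ --- a contradiction.

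\emph{Part (1), ``only if''.} Suppose $F$ is equivalent to a rational map; since an equivalence transports $F$-stable multicurves to equivalent data with the same transition matrix, I may assume $F = R$ is itself rational with hyperbolic orbifold. Let $\Gamma = \{\gamma_1,\dots,\gamma_n\}$ be $F$-stable, and for each $i$ let $w_i$ be the modulus of the (finite) extremal annulus in the class of $\gamma_i$ on $\CC\setminus P_R$. Fix $k$. The preimages under $R$ of the extremal annuli of the $\gamma_j$, restricted to the components $\gamma_{k,j,\alpha}$ of $R^{-1}(\gamma_j)$ homotopic to $\gamma_k$, form a family of disjoint nested annuli in the class of $\gamma_k$ with moduli $w_j/\deg(R|_{\gamma_{k,j,\alpha}})$; by Gr\"{o}tzsch's inequality their moduli sum to at most $w_k$, i.e. $(F_\Gamma w)_k \le w_k$, so $F_\Gamma w \le w$. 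Because the orbifold of $R$ is hyperbolic, $R^{-1}(P_R)$ strictly contains $P_R$ in the orbifold sense, so for some $k$ the preimage annuli do not exhaust the extremal annulus in the class of $\gamma_k$ and the inequality is strict; propagating strictness through the irreducible blocks and pairing $F_\Gamma w\le w$ with a nonnegative left Perron--Frobenius eigenvector of $F_\Gamma$ yields $\lambda(\Gamma) < 1$.

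\emph{Part (2).} When the orbifold is hyperbolic, the fixed point of $\sigma_F$ is unique: two distinct fixed points would be joined by a Teichm\"{u}ller geodesic whose $\sigma_F$-image is a path with the same endpoints of length $\le d_{\mathcal{T}}(\tau_1,\tau_2)$, hence a geodesic, forcing $\sigma_F$ to be a local isometry along it against (ii). Now let $(\Phi,\Psi)$ be an equivalence between rational maps $F$ and $\widehat{F}$ with hyperbolic orbifolds. Via the commuting square and the isotopy $\Phi\simeq\Psi$ rel $P_F$, the homeomorphism $\Psi$ identifies $\mathcal{T}(\Sigma,P_F)$ with $\mathcal{T}(\Sigma,P_{\widehat F})$ in a way conjugating $\sigma_F$ to $\sigma_{\widehat F}$ and carrying the fixed point from which $F$ arises to one from which $\widehat F$ arises; by uniqueness these correspond to a single fixed point, and the dictionary then says $F$ and $\widehat F$ arise from the same marked Riemann sphere, i.e. are conjugate by a M\"{o}bius transformation, which represents the given equivalence. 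This proves (2).

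The step I expect to be the main obstacle is the first half of the ``if'' direction: passing from a degenerating $\sigma_F$-orbit to an honest $F$-stable multicurve with $\lambda\ge 1$. The remaining ingredients --- holomorphicity and the $1$-Lipschitz property of $\sigma_F$, the fixed-point/rational-map dictionary, and the Schwarz-type (Gr\"{o}tzsch) estimates behind the ``only if'' direction --- are comparatively routine; controlling \emph{which} curves can degenerate under iterated pullback, and making the modulus comparisons uniform enough to read off the eigenvalue bound, is the technical heart of the theorem.
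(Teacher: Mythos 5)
This is Thurston's rigidity theorem itself, which the paper quotes as background without proof (deferring to the cited Douady--Hubbard reference), and your outline is precisely that standard Douady--Hubbard argument via the pullback map $\sigma_F$ on Teichm\"{u}ller space: the fixed-point dictionary, the $1$-Lipschitz property from the pushforward of quadratic differentials, the compactness-in-moduli-space dichotomy with Mumford's criterion, the Gr\"{o}tzsch-inequality comparison giving $w^{(n+1)} \ge F_\Gamma w^{(n)}$, and uniqueness of the fixed point for part (2) are all the right ingredients in the right order. The one imprecision is your fact (ii): even for hyperbolic orbifold a single application of $F_*$ need not strictly decrease the norm of every nonzero differential --- equality $\|F_*q\| = \|q\|$ forces $F^*F_*q = (\deg F)\,q$, and the pole-counting contradiction with hyperbolicity is extracted only after a second application, so the strict contraction holds for $\sigma_F^{\circ 2}$ rather than $\sigma_F$ --- but this is a standard refinement that does not change the architecture of the proof.
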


An $F$-stable multicurve with $\lambda(\Gamma) \geq 1$ is called a \emph{(Thurston) obstruction}. The condition that $F$ has a hyperbolic orbifold is a purely combinatorial one; this condition can be checked by inspecting the dynamics on the union of the critical and postcritical sets of $F$. We remark that since the maps in this note have two fixed critical points, the respective orbifolds are guaranteed to be hyperbolic. For further details, see \cite{DouadyHubbard:Thurston}. 

\begin{defn}\label{d:Levy}
 A multicurve $\Gamma = \{ \gamma_{1}, \ldots , \gamma_{n} \}$ is a Levy cycle if for each $i =1,\ldots,n$, the curve $\gamma_{i-1}$ (or $\gamma_{n}$ if $i = 1$) is homotopic to some component $\gamma_{i}'$ of $F^{-1}(\gamma_{i})$ (rel $P_{F}$) and the map $F \colon \gamma_{i}' \to \gamma_{i}$ is a homeomorphism. We say $\Gamma$ is a degenerate Levy cycle if the connected components of $\Sigma \setminus \bigcup_{i=1}^n \gamma_i$ are $D_1, \ldots ,D_n, C$, where each $D_i$ is a disk and moreover for each $i$ the preimage $F^{-1}(D_{i+1})$ contains a component $D_{i+1}'$ which is isotopic to $D_i$ relative to $P_F$ and is such that $F \colon D_{i+1}' \to D_{i+1}$ is a homeomorphism. A degenerate Levy cycle is called a removable Levy cycle if in addition, for all $k \geq 1$ and all $i$, the components of $F^{-k}(D_i)$ are disks.
\end{defn}  

By Lemma 2.2 of \cite{Tan:quadmat}, a Levy cycle can be completed to a Thurston obstruction.

 \subsection{Background on branched coverings}
 
 The original definition of Thurston obstruction requires us to search for $F$-stable multicurves which are obstructions. By Proposition~\ref{p:irred}, it is sufficient to consider \emph{irreducible obstructions}. The advantage of restricting attention to irreducible obstructions is that they are simpler than Thurston obstructions, since they can be considered as the fundamental part of the obstruction. 
 
 \begin{defn}
  A multicurve is said to be irreducible if its associated matrix $F_\Gamma$ is irreducible. A multicurve $\Gamma$ is an irreducible obstruction if the matrix $F_\Gamma$ is irreducible and if $\lambda(\Gamma) \geq 1$.
 \end{defn}

 Recall that an $n \times n$ matrix $A_{ij}$ is irreducible if for all $(i,j)$, there exists $k >0$ such that the element $(A^k)_{ij}>0$. In terms of a multicurve $\Gamma = \{ \gamma_1, \ldots, \gamma_n \}$, this means that for each pair $(i,j) \in \{1,\ldots,n\}^2$, there exists an integer $k>0$ and a component $\gamma'$ of $F^{-k}(\gamma_j)$ which is isotopic to $\gamma_i$, and that for each $1 \leq \ell \leq k$, the curve $F^{\ell} (\gamma')$ is isotopic to a curve in $\Gamma$. The following result (see \cite{ShishTan}) ties together the notions of a Thurston obstruction and an irreducible obstruction.
 
 \begin{prop}\label{p:irred}
  If $F$ has a hyperbolic orbifold, then $F$ is not equivalent to a rational map if and only if $F$ has an irreducible obstruction.
 \end{prop}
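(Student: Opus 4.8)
The plan is to derive both implications from Thurston's Theorem, combined with two standard facts: the Perron--Frobenius theory of non-negative matrices, and the statement that a rational map with hyperbolic orbifold carries \emph{no} multicurve with leading eigenvalue $\geq 1$ --- not merely no \emph{stable} one. (When $|P_F| \leq 3$ there are no non-peripheral simple closed curves at all, so both sides of the equivalence are vacuously false and one may assume $|P_F| \geq 4$.)

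For the implication ``$F$ not equivalent to a rational map $\Rightarrow$ $F$ has an irreducible obstruction'', I would begin with an $F$-stable multicurve $\Gamma_0$ with $\lambda(\Gamma_0) \geq 1$, furnished by Thurston's Theorem. After reordering its curves, $F_{\Gamma_0}$ is block upper-triangular with irreducible (or zero) diagonal blocks $B_1, \dots, B_m$; since the characteristic polynomial factors as $\prod_k \det(B_k - xI)$, the leading eigenvalue of $F_{\Gamma_0}$ equals $\max_k \lambda(B_k)$, so some $B_{k_0}$ is irreducible with $\lambda(B_{k_0}) \geq 1$ (see \cite{Gantmacher}). Let $\Gamma \subseteq \Gamma_0$ be the sub-multicurve indexed by the rows of $B_{k_0}$. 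An entry of $F_\Gamma$ counts exactly the same weighted preimage components homotopic to curves of $\Gamma \subseteq \Gamma_0$ as the corresponding entry of $F_{\Gamma_0}$, so $F_\Gamma = B_{k_0}$; hence $\Gamma$ is an irreducible obstruction.

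For the converse, suppose $F$ has an irreducible obstruction $\Gamma$ but, for contradiction, $F$ is equivalent to a rational map $R$ (which again has hyperbolic orbifold, since the orbifold is determined by the postcritical dynamics, which is preserved by equivalence). An equivalence $(\Phi,\Psi)$ carries $\Gamma$ to a multicurve $\Gamma' = \Phi(\Gamma)$ of $R$; because the defining diagram commutes, $R^{-1}(\Phi(\gamma_j)) = \Psi(F^{-1}(\gamma_j))$, so the non-peripheral components of $R^{-1}(\Gamma')$ and their covering degrees match those of $F^{-1}(\Gamma)$, and $R_{\Gamma'} = F_\Gamma$, whence $\lambda(\Gamma') = \lambda(\Gamma) \geq 1$. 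It remains to contradict this via the fact that every multicurve $\Delta$ of $R$ has $\lambda(R_\Delta) < 1$. Here I would invoke hyperbolic geometry: with $P = P_R$, the map $R \colon \CC \setminus R^{-1}(P) \to \CC \setminus P$ is an unbranched covering, hence a local isometry of the hyperbolic metrics, while the inclusion $\CC \setminus R^{-1}(P) \hookrightarrow \CC \setminus P$ is a contraction that is \emph{strict} (this uses $R^{-1}(P) \supsetneq P$, which holds precisely because of the hyperbolic-orbifold hypothesis). Taking pairwise disjoint annuli $A_i \subseteq \CC \setminus P$ with core homotopic to $\delta_i$ and modulus near the extremal value, the components of $R^{-1}(A_i)$ are pairwise disjoint annuli with modulus scaled down by their covering degree; grouping those whose cores are homotopic in $\CC \setminus P$ to a fixed $\delta_j$ and applying the Gr\"otzsch inequality yields $R_\Delta^\top \vec w \leq \vec w$ for the positive vector $\vec w$ of extremal moduli, and the strictness of the contraction upgrades this to $\lambda(R_\Delta) < 1$.

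The crux is this last estimate. Thurston's Theorem as quoted only excludes \emph{stable} obstructions for a rational map, and one cannot in general enlarge $\Gamma$ to a finite stable multicurve, so the quantitative hyperbolic/extremal-length input is genuinely needed; the delicate point within it is the strict inequality, as the borderline case $\lambda = 1$ would force $P$ to be totally invariant and hence $|P| \leq 2$, contradicting the existence of a multicurve. These facts are standard and may be found in \cite{DouadyHubbard:Thurston} and \cite{ShishTan}.
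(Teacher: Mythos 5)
The paper does not prove Proposition~\ref{p:irred}; it simply cites \cite{ShishTan}. Your argument is a correct reconstruction of the standard proof behind that citation. The ``$\Rightarrow$'' direction via the Frobenius normal form is clean and complete: the sub-multicurve $\Gamma$ indexed by an irreducible diagonal block $B_{k_0}$ with $\lambda(B_{k_0}) = \lambda(F_{\Gamma_0}) \geq 1$ does satisfy $F_\Gamma = B_{k_0}$, because the $(i,j)$ entry of $F_\Gamma$ depends only on the curves $\gamma_i,\gamma_j$ and the map $F$, not on what else is in the multicurve; and the maximizing block cannot be a $1\times 1$ zero block since $\lambda \geq 1 > 0$. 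For ``$\Leftarrow$'', your observation that Thurston's theorem as quoted only excludes \emph{stable} obstructions for a rational map, and that one cannot in general stabilize $\Gamma$ (since $\Gamma \cup F^{-1}(\Gamma)$ need not remain a multicurve --- the preimage curves can intersect the original ones), correctly identifies why the quantitative extremal-length input for \emph{all} multicurves of $R$ is genuinely needed. Your sketch of that estimate has the usual wrinkles: annuli of near-extremal modulus around the $\delta_i$ need not be pairwise disjoint (one works with hyperbolic collars or iterates $R$ to absorb the lost constant), and upgrading $\lambda \leq 1$ to $\lambda < 1$ requires a compactness argument on the strictly contracting inclusion $\CC \setminus R^{-1}(P) \hookrightarrow \CC \setminus P$ rather than mere pointwise strictness; your heuristic that equality would force $R^{-1}(P) = P$ is the right intuition but is not by itself a proof. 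These are precisely the points handled in \cite{DouadyHubbard:Thurston} and \cite{ShishTan}, so the proposal is a faithful outline of the cited argument rather than a genuinely different route.
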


 We remark that a Levy cycle is an example of an irreducible obstruction. Now let $F \colon \Sigma \to \Sigma$ be a postcritically finite branched covering and $A,B \subseteq \Sigma$. We say that $A$ is \emph{isotopically contained in} $B$ relative to $P_F$ if there exists a homeomorphism $\phi \colon \Sigma \to \Sigma$ which is isotopic to the identity relative to $P_F$ and for which $\phi(A) \subseteq B$. The following two propositions from \cite{ShishTan} will be helpful in our analysis of irreducible obstructions. 
 
 \begin{prop}\label{p:isotopiccontain}
  Let $\Gamma$ be an irreducible obstruction for a postcritically finite branched covering $F$. Then
  \begin{enumerate}
  \item Each connected component of $\Sigma \setminus F^{-1}(\Gamma)$ is exactly a connected component of $F^{-1}(A)$, where $A$ is some connected component of $\Sigma \setminus \Gamma$.
  \item Any connected component of $\Sigma \setminus F^{-1}(\Gamma)$ is isotopically contained in a connected component of $\Sigma \setminus \Gamma$ relative to $P_F$.
  \end{enumerate}
 \end{prop}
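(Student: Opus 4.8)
The plan is to treat part~(1) as a point-set statement that uses neither irreducibility nor $\lambda(\Gamma)\ge 1$, and then to deduce part~(2) by isotoping $F^{-1}(\Gamma)$ into a position that literally contains a copy of $\Gamma$, the one genuinely dynamical input being that irreducibility forces every curve of $\Gamma$ to occur, up to isotopy, among the components of $F^{-1}(\Gamma)$. For part~(1): since $\Gamma\cap P_F=\emptyset$ and $P_F$ contains every critical value of $F$, the map $F$ restricts to an unbranched covering over a neighbourhood of $\Gamma$, so $F^{-1}(\Gamma)$ is again a disjoint union of simple closed curves and
\[
	S^2\setminus F^{-1}(\Gamma)=F^{-1}\bigl(S^2\setminus\Gamma\bigr)=\bigsqcup_{A}F^{-1}(A),
\]
the union being over the connected components $A$ of $S^2\setminus\Gamma$. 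Each $F^{-1}(A)$ is open with topological frontier contained in $F^{-1}(\partial A)\subseteq F^{-1}(\Gamma)$, so a connected component $B$ of $F^{-1}(A)$ is open in $S^2$ with $\partial B\subseteq F^{-1}(\Gamma)$, hence relatively closed in $S^2\setminus F^{-1}(\Gamma)$, hence a connected component of $S^2\setminus F^{-1}(\Gamma)$; conversely any component of $S^2\setminus F^{-1}(\Gamma)$ is connected and so lies in exactly one $F^{-1}(A)$, of which it is a component. (The same bookkeeping realises each $F\colon B\to A$ as a branched covering.)

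For part~(2), fix a component $B$ of $S^2\setminus F^{-1}(\Gamma)$; its boundary is a union of curves of $F^{-1}(\Gamma)$. Two facts feed in. First, $F$-stability of $\Gamma$ (implicit in $F_\Gamma$ being defined) says every non-peripheral component of $F^{-1}(\Gamma)$ is homotopic rel $P_F$ to a curve of $\Gamma$. Second, irreducibility of $F_\Gamma$ forbids a zero row, so for each $\gamma_k\in\Gamma$ there is a component $\delta_k$ of $F^{-1}(\Gamma)$ homotopic rel $P_F$ to $\gamma_k$; the $\delta_k$ are pairwise disjoint (they are components of $F^{-1}(\Gamma)$) and pairwise non-homotopic (the $\gamma_k$ are), so $\Delta=\{\delta_1,\dots,\delta_n\}$ is a multicurve that is isotopic to $\Gamma$ via the matching $\delta_k\leftrightarrow\gamma_k$. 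By the change-of-coordinates principle for multicurves on a punctured sphere there is then an ambient isotopy $(\psi_t)$ of $S^2$, constant on $P_F$, with $\psi_0=\mathrm{id}$ and $\psi_1(\delta_k)=\gamma_k$ for all $k$. Since $B$ is disjoint from $\Delta\subseteq F^{-1}(\Gamma)$, the set $\psi_1(B)$ is connected and disjoint from $\psi_1(\Delta)=\Gamma$, hence lies in a single connected component $A$ of $S^2\setminus\Gamma$ (so that $\psi_1(\overline B)\subseteq\overline A$), which is precisely the assertion that $B$ is isotopically contained in $A$.

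The point-set manipulations in part~(1) and the no-zero-row observation are routine. The step I expect to need the most care is the passage from ``$\Delta$ is a multicurve matched curve-by-curve to the multicurve $\Gamma$'' to ``there is an ambient isotopy rel $P_F$ carrying $\Delta$ onto $\Gamma$'': this rests on the bigon/change-of-coordinates technology for simple closed curves on surfaces, and one must check that all $n$ matchings can be realised simultaneously by a single isotopy constant on $P_F$, without dragging any of the remaining (peripheral, or merely parallel) components of $F^{-1}(\Gamma)$ across a curve of $\Gamma$. Once this surface-topology input is granted, the proposition follows as above.
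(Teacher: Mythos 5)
The paper does not prove this proposition; it is quoted from Shishikura--Tan (\cite{ShishTan}), so there is no in-text proof to compare against. On its own terms your argument is correct, and it is in fact the standard one: part~(1) is a point-set exercise using only that $\Gamma\cap P_F=\emptyset$ (so $F$ is an unbranched covering near $\Gamma$ and $S^2\setminus F^{-1}(\Gamma)=\bigsqcup_A F^{-1}(A)$ is an open partition), and part~(2) hinges on the observation that irreducibility of $F_\Gamma$ rules out a zero row, which extracts from $F^{-1}(\Gamma)$ a sub-multicurve $\Delta$ matched curve-by-curve with $\Gamma$; a single ambient isotopy rel $P_F$ carrying $\Delta$ onto $\Gamma$ then sends any component $B$ of $S^2\setminus F^{-1}(\Gamma)$, which is disjoint from $\Delta$, into a single component of $S^2\setminus\Gamma$. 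Two remarks. First, the parenthetical appeal to ``$F$-stability of $\Gamma$ (implicit in $F_\Gamma$ being defined)'' is both unnecessary and slightly misleading: irreducible obstructions are typically \emph{not} $F$-stable (they arise as irreducible blocks of larger stable multicurves, and preimages may be peripheral or land outside $\Gamma$), and the matrix $F_\Gamma$ is defined for any multicurve by simply discarding such preimages. Fortunately you never use this fact. Second, the concern you flag at the end --- about dragging the remaining (peripheral or parallel) components of $F^{-1}(\Gamma)$ across $\Gamma$ --- is a non-issue: the conclusion only needs $\psi_1(\Delta)=\Gamma$ and that $\psi_1$ is a homeomorphism, so where the other preimage components land is irrelevant; the only surface-topology input you genuinely need is the simultaneous ambient isotopy rel $P_F$ for the two multicurves $\Delta$ and $\Gamma$, which is the standard change-of-coordinates fact you cite.
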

 
 \begin{prop}\label{p:critvals}
 $\Gamma$ is a removable Levy cycle for $F$ if and only if $\Gamma$ is an irreducible obstruction for $F$ and there exists a disk component $D$ of $\Sigma \setminus \Gamma$ such that for all $n$, the components of $F^{-n}(D)$ are disks.
\end{prop}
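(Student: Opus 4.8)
The plan is to prove the two implications separately; the forward direction is bookkeeping with the definitions, while the reverse direction carries the content. For ($\Rightarrow$), suppose $\Gamma$ is a removable Levy cycle, with associated disks $D_1,\dots,D_n$ and complementary region $C$. For each $i$ the given homeomorphism $F\colon D_{i+1}'\to D_{i+1}$, with $D_{i+1}'$ isotopic to $D_i$, restricts to a homeomorphism $\partial D_{i+1}'\to\gamma_{i+1}$ with $\partial D_{i+1}'\simeq\gamma_i$; this is exactly the defining condition of a Levy cycle for $\{\gamma_1,\dots,\gamma_n\}$, which is therefore an irreducible obstruction, as noted in the text. Taking $D=D_1$, the removability hypothesis is precisely the statement that the components of $F^{-n}(D)$ are disks for all $n$, so both right-hand conditions hold.

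For ($\Leftarrow$) write $\gamma_1=\partial D$ (I read the hypothesis as: the components of $F^{-n}(D)$ are disks for every $n\ge1$). By Proposition~\ref{p:isotopiccontain}(1) each component $U$ of $F^{-1}(D)$ is an entire component of $S^2\setminus F^{-1}(\Gamma)$ and $F\colon U\to D$ is proper and onto; since $U$ is a disk it has a single boundary circle, which covers $\gamma_1$ with degree $e_U:=\deg(F|_U)$, and Riemann--Hurwitz forces $U$ to contain exactly $e_U-1$ critical points counted with multiplicity, with $\sum_U e_U=\deg F$. Every component of $F^{-1}(\gamma_1)$ is the boundary of such a disk (the $D$-side of any preimage curve lies in a disk component of $F^{-1}(D)$), so components of $F^{-1}(\gamma_1)$ correspond bijectively to disk components of $F^{-1}(D)$. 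If $U$ carries at least two postcritical points its boundary is essential, and by Proposition~\ref{p:isotopiccontain}(2) $U$ is isotopically contained in a complementary component of $S^2\setminus\Gamma$, which, being forced to contain a disk with boundary isotopic to $\partial U$, must itself be a disk $D_i$ with $\partial D_i=\gamma_i\simeq\partial U$; since isotopies rel $P_F$ lift through $F$, $D_i$ inherits from $D$ the property that all its iterated preimages are disks.

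Iterating this and using the irreducibility of $F_\Gamma$ — for every $i$ there is $t\ge1$ with $(F_\Gamma^{\,t})_{i1}>0$ — shows that \emph{every} $\gamma_i\in\Gamma$ bounds a disk component $D_i$ of $S^2\setminus\Gamma$, each with all iterated preimages disks. Since $n$ disjoint circles cut $S^2$ into $n+1$ pieces, exactly one further complementary region $C$ remains; repeating the boundary-curve count one level down shows $F^{-1}(C)$ is connected. What is left is to produce a cyclic ordering of $D_1,\dots,D_n$ together with, for each $i$, a component of $F^{-1}(D_{i+1})$ isotopic to $D_i$ on which $F$ is a homeomorphism — that is, to see that the degrees $e_U$ can be taken equal to $1$ along a Hamiltonian cycle among the $D_i$. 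Tracing $\gamma_1$ around already yields \emph{some} cycle $D=D^{(0)},D^{(1)},\dots,D^{(k)}=D$ of disk components with $D^{(l)}$ isotopic to a component of $F^{-1}(D^{(l-1)})$, so the remaining issue is genuinely about mapping degrees and about whether the cycle meets every $D_i$.

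This last step is where the hypothesis $\lambda(\Gamma)\ge1$ must be used in an essential way, and I expect it to be the main obstacle. The approach I would take is to compare $F_\Gamma$ entrywise with the non-negative matrix recording only the degree-one preimage curves, and to combine Perron--Frobenius monotonicity of the leading eigenvalue with the fact that a Levy cycle has leading eigenvalue exactly $1$: this should force $\lambda(\Gamma)=1$, force $F_\Gamma$ to be a cyclic permutation matrix, and hence force the non-peripheral components of each $F^{-1}(\gamma_i)$ to consist of the single curve $\gamma_{i-1}$ mapped by a homeomorphism — the Riemann--Hurwitz accounting over the $D_i$ and over the connected region $C$ being what pins the degrees down. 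Granting this, $\Gamma$ is by definition a degenerate Levy cycle, and since (by the recursion above) every $D_i$ has all iterated preimages disks, it is a removable Levy cycle, which completes the proof.
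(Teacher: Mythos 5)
A point of context first: the paper does not prove Proposition~\ref{p:critvals} --- it is imported from Shishikura--Tan (\cite{ShishTan}) and stated without argument --- so there is no in-paper proof to compare against. Judging your write-up on its own merits: the forward implication is correct, a direct unwinding of the definitions of degenerate and removable Levy cycle, together with the remark in the text that a Levy cycle is an irreducible obstruction with $\lambda = 1$. The reverse implication, however, is a sketch with real gaps, and you acknowledge as much yourself.

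The first gap is the assertion that if a disk component $U$ of $F^{-1}(D)$ carries at least two postcritical points, then the component $A$ of $S^2 \setminus \Gamma$ that isotopically contains $U$ ``must itself be a disk $D_i$ with $\partial D_i = \gamma_i \simeq \partial U$.'' Proposition~\ref{p:isotopiccontain} as quoted does not say that $\partial U$ is isotopic to a boundary curve of $A$: a priori $\partial U$ could be an essential curve in the interior of a non-disk $A$, bounding a small punctured sub-disk of $A$; then $\partial U$ is not isotopic to any curve of $\Gamma$ and $A$ is not a disk. An irreducible obstruction need not be $F$-stable, so essential preimage curves of $\gamma_1$ need not lie in $\Gamma$, and ruling this out is precisely the kind of structural fact one needs to extract from Shishikura--Tan rather than wave through. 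Your subsequent induction that \emph{every} $\gamma_i$ bounds a disk component with all iterated preimages disks rests on this unproved step. The second gap is the final paragraph, which you explicitly present as an approach you ``would take'' and ``expect'' to work: the Perron--Frobenius comparison, the deduction that $\lambda(\Gamma)=1$, and the conclusion that $F_\Gamma$ is a cyclic permutation matrix giving a Hamiltonian cycle of degree-one disk preimages is the entire content of the converse, and none of it is carried out. The claim that ``repeating the boundary-curve count one level down shows $F^{-1}(C)$ is connected'' is unjustified and, incidentally, not required by the definition of a degenerate Levy cycle. So: the forward direction is fine, but the converse is an outline with two unfilled holes, not a proof.
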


We will also make use of the following classical result (see e.g \cite{MilnorComplex}, Theorem 7.2).

\begin{thm}[Riemann-Hurwitz Theorem]\label{t:RH}
 Let $F \colon Y \to X$ be a degree $d$ branched covering of compact Riemann surfaces. Then
 \[
  \chi(Y) = d \chi(X) - \sum_{P \in Y} (e_P - 1),
 \]
where $\chi(Y)$ and $\chi(X)$ are the Euler characteristics of $Y$ and $X$ respectively, and $e_P$ is the local degree of $F$ at $P$.
\end{thm}

Since there are only finitely many points in $Y$ such that $e_P \neq 1$, the final term in the equation is well-defined. The particular circumstances in which we will need Theorem~\ref{t:RH} are summarised in the following lemma. 
 
 \begin{lem}\label{l:RH}
  Suppose $F$ is a cubic branched covering of the Riemann sphere with simple critical points and $D \subseteq \CC$ is a disk. Then
  \begin{enumerate}
   \item[(a)] If $D$ contains precisely two critical values and $F^{-1}(D)$ contains a non-disk component then $F^{-1}(D)$ consists of an annulus $A$ and a disk $D'$. Furthermore, the boundary curves of $A$ and the curve $\partial D'$ both map by degree $1$ onto $\partial D$. Both the critical points corresponding to the two critical values in $D$ are contained in $A$.
   \item[(b)] If $D$ contains precisely one critical value then $F^{-1}(D)$ consists of two disks. The boundary curve of one of the disks maps by degree $2$ onto $\partial D$ and the boundary curve  of the other disk maps by degree $1$ onto $\partial D$.
   \item[(c)] If $D$ contains precisely no critical values then $F^{-1}(D)$ is the union of three disks. The boundary curve of each preimage disk maps by degree $1$ onto $\partial D$.
  \end{enumerate}
 \end{lem}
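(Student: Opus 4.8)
The plan is to work directly with $U = F^{-1}(D)$, viewing $F|_U \colon U \to D$ as a proper branched covering of degree $3$ of a disk and reading off the structure of $U$ from the Riemann--Hurwitz formula together with an Euler-characteristic bookkeeping on $\partial U$. First I would record the standing hypothesis that the critical values of $F$, which all lie in $P_F$, avoid the curve $\partial D$. Then $\partial U = F^{-1}(\partial D)$ is a disjoint union of, say, $\nu$ circles, and $F$ restricts to a degree-$3$ covering $\partial U \to \partial D$ of the circle $\partial D$; the degrees of $F$ on these circles are therefore positive integers summing to $3$, so $\nu \in \{1,2,3\}$. Since every component of $U$ embeds in the sphere it is planar, so a component with $b$ boundary circles has Euler characteristic $2-b$; writing $r$ for the number of components of $U$, this gives $\chi(U) = 2r - \nu$.

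Next I would set up the arithmetic that forces the case split. Riemann--Hurwitz for $F|_U \colon U \to D$ gives $\chi(U) = 3 - R$, where $R = \sum_{c \in \Omega_F \cap U}(\deg_c F - 1)$ is the ramification of $F$ over $D$; comparing with $\chi(U) = 2r - \nu$ yields the single relation $\nu = 2r - 3 + R$. Because we may assume every critical point of $F$ is simple --- two simple critical points cannot share a value, as $2+2 > 3$ --- a critical value of $F$ lies in $D$ exactly when the unique critical point above it lies in $U$, so $R$ is precisely the number of critical values of $F$ contained in $D$. Hence $R = 2, 3, 4$ in cases (a), (b), (c) respectively.

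What remains is a short, forced count in each case. For (a), $R=2$ together with $1 \le \nu \le 3$ and $1 \le r \le 3$ leaves only $(r,\nu) = (1,1)$ or $(2,3)$; the first would make $U$ a single disk, which the hypothesis excludes, so $U$ has two components, one with a single boundary circle (a disk) and one with two (an annulus, by planarity). A degree-$1$ proper cover is a homeomorphism, so the annulus must be the degree-$2$ piece and the disk the degree-$1$ piece; the two boundary circles of the annulus then carry degrees summing to $2$, hence each equal to $1$, while all of the ramification of $F$ over $D$ lies in the annulus, so both critical points of $F$ in $U$ lie in $A$. For (b), $R=3$ makes $\nu = 2r$ even with $\nu \le 3$, so $\nu = 2$ and $r = 1$: $U$ is a single annulus mapping with degree $3$, and its two boundary circles carry degrees summing to $3$, i.e. one of degree $2$ and one of degree $1$. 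For (c), $R=4$ makes $\nu = 2r+1 \le 3$, so $r = 1$ and $\nu = 3$: $U$ is a connected planar surface with three boundary circles, i.e. a disk with two holes, and its three boundary circles carry degrees summing to $3$, hence each equal to $1$.

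I do not expect a genuine obstacle here: the argument is Riemann--Hurwitz plus an Euler-characteristic count, with the case split forced by a single linear relation. The steps that need a little care are the bridge from the number of critical values of $F$ in $D$ to the ramification number $R$, which is where simplicity of the critical points is used (in the intended application the two critical values in case (a) are the images of the two non-fixed critical points, which are exactly the two critical points named in the statement), and the standing assumption that no critical value of $F$ lies on $\partial D$, which is what makes $\partial U$ a $1$-manifold and $F|_{\partial U}$ an honest covering of the circle.
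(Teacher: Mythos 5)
The paper gives no proof of this lemma, asserting only that it is a simple application of the Riemann--Hurwitz formula; your Riemann--Hurwitz plus Euler-characteristic bookkeeping, with the case split forced by the single relation $\nu = 2r - 3 + R$, is exactly the right approach, and the arithmetic in all three cases (including the identification of boundary degrees and the location of the critical points in the annulus) is correct. You are also right to flag the bridge from ``number of critical values in $D$'' to the ramification $R$ as the step that needs care.

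The one thing worth pushing further is the assertion that ``we may assume every critical point of $F$ is simple.'' The Remark after the Main Theorem only reduces the two \emph{fixed} critical points to the simple case; a cubic branched covering with two simple fixed critical points can a priori still have a single \emph{free} critical point of local degree $3$ (ramification contribution $2$), and in that situation the lemma as the paper words it actually fails in case (a): a disk containing that totally ramified value together with one other critical value has $R = 3$, and its preimage is a single annulus mapping with degree $3$, not the annulus-plus-disk the statement promises. So the invariant that really governs the case split is the ramification $R$ of $F$ over $D$, and the identification of $R$ with the count of critical values in $D$ is an extra hypothesis --- namely that every critical point lying over $D$ is simple. That hypothesis does hold wherever the lemma is applied in the paper (in the quadratic-like case the existence of two distinct free critical values already forces four distinct simple critical points), but it is not a consequence of the cited Remark, and your argument would be cleaner if you either stated it as a hypothesis or phrased the conclusion directly in terms of $R$. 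Apart from this, the proof is complete and matches what the paper must have in mind.
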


 \begin{proof}
  We first remark that if $D$ is a closed disk then $\chi(D) = 1$ and that if $U$ is a closed disk from which $n$ open disks have been removed then $\chi(U) = 1 - n$. Let $Y = F^{-1}(D)$.
  \begin{itemize}
   \item[(a)] By Theorem~\ref{t:RH} we see that $\chi(Y) = 1$. By assumption $Y$ cannot be a disk, and so $Y$ consists of at least two connected components. Furthermore, each boundary curve of $Y$ maps onto $\partial D$ and since $F$ has degree $3$, we see that $Y$ has at most three boundary curves. The only possibility is then that $Y$ is the union of an annulus $A$ and a disk $D'$ and each boundary curve of $Y$ maps homeomorphically onto $D$. Since $D'$ must therefore map homeomorphically onto $D$, we see that it cannot contain any critical points of $F$. Thus both critical points of $F$ that lie in $Y$ are both contained in $A$.
   \item[(b)] By Theorem~\ref{t:RH}, we see that $\chi(Y) = 2$. It follows that $Y$ is the disjoint union of two disks, $\Delta_1$ and $\Delta_2$. Since $\partial D$ is mapped by degree $3$, we see that the boundary curve of one of the disks must map by degree $2$ onto $\partial D$. The boundary curve of the other disk must then map homeomorphically onto $\partial D$.
   \item[(c)] Since $D$ contains no critical values, it follows that $F \colon Y \to D$ is a degree $3$ cover of $D$ and so $Y$ is the union of three disks. Furthermore, the boundaries of the three disks making up $Y$ are precisely the preimages of $\partial D$. Thus each boundary must map onto $\partial D$ homeomorphically.
  \end{itemize}

 \end{proof}

 \begin{rem}
  Note that the assumption that $F^{-1}(D)$ contains a non-disk component in part (a) is necessary. Otherwise, it is possible that if $D$ contains only two critical values, the set $F^{-1}(D)$ could consist of just one disk, which maps by degree $3$ to $D$ (for example, this case is realised if $D$ is one of the hemispheres in a mating).
 \end{rem}
 
By Proposition~\ref{p:critvals}, if an irreducible obstruction is not a removable Levy cycle, then there exists a disk component $D$ such that $F^{-1}(D)$ contains a non-disk component. It is easy to see that by Theorem~\ref{t:RH}, the disk $D$ must contain at least two critical values of $F$.
 
\subsection{Statement of the Main Theorems}
 
 Our main theorem is the following.
 
 \begin{mthm}\label{mthm}
 Let $F$ be a cubic Thurston map on a topological sphere $\Sigma$ with two fixed critical points. Then if $\Gamma$ is an irreducible obstruction for $F$, then $\Gamma$ contains a Levy cycle.
 \end{mthm}

 \begin{rem}
  It is actually sufficient to prove the theorem in the case that the fixed critical points are simple (i.e. have local degree $2$). If both critical points have local degree $3$ then $F$ is equivalent to the map $z \mapsto z^3$. If precisely one of the fixed critical points has local degree $3$ then $F$ is a topological polynomial and then all possible obstructions are (degenerate) Levy cycles \cite{BFH}.
 \end{rem}
 
 The sequel to this paper (\cite{CubicmatingsPt2}) will use this result to analyse the matings of polynomials in $\mathcal{S}_1$, the space of cubic polynomials with a fixed marked critical point. An immediate consequence of Main Theorem \ref{mthm} is the following. 
 
 \begin{mthm}\label{mthm2}
  Let $f$ and $g$ be monic postcritically finite polynomials of degree $3$, each with a fixed critical point. Then any obstruction to the formal mating $f \uplus g$ contains a Levy cycle. 
 \end{mthm}

 \section{Proof of Main Theorem \ref{mthm}}

We now analyse the irreducible obstructions for cubic Thurston maps with two fixed critical points. We will see that, if $\Gamma$ is not a removable Levy cycle, then there are two possible cases, a ``Newton-like case'' and a ``quadratic-like case''. These are summarised in the following proposition.

\begin{prop}\label{p:notremov}
Let $\Gamma$ be an irreducible obstruction for a cubic Thurston map with two fixed critical points, and suppose that $\Gamma$ is not a removable Levy cycle. Then there exists a disk $D$ such that $F^{-1}(D)$ contains a non-disk component. Furthermore, precisely one of the following two cases occurs.
\begin{itemize}
 \item \emph{(Quadratic-like case)}. The disk $D$ contains precisely two critical values, neither of which is a fixed critical point.
 \item \emph{(Newton-like case)}. The disk $D$ contains a fixed critical point.
\end{itemize}
\end{prop}

\begin{proof}
 By Proposition~\ref{p:critvals} there must be a disk component of $F^{-1}(D)$ which has a non-disk component. It follows easily from Lemma~\ref{l:RH} that $D$ must contain (at least) two critical values of $F$. If one of these critical values is also a fixed critical point, we are in the Newton-like case. Otherwise, the disk $D$ must contain precisely two critical values (the two critical values which are not also fixed critical points), which means we are in the quadratic-like case.
\end{proof}

We deal with both of the cases from Proposition~\ref{p:notremov} in turn, showing that in each case any irreducible obstruction will contain a Levy cycle. 
 
 \subsection{Quadratic-like case}
 
 This case is similar to the proof that an irreducible obstruction to a quadratic (or bicritical) postcritically finite branched cover of the sphere is a Levy cycle \cite{Tan:quadmat}. However, there is an extra difficulty in the cubic case. In the quadratic case, the existence of a Levy cycle is ensured by observing that every curve $\gamma$ in an irreducible obstruction has precisely two preimages. This means that the preimages of $\gamma$ map homeomorphically onto $\gamma$, and so any periodic cycle in the obstruction must be a Levy cycle. In the following, we will show that in the cubic case, all curves in an irreducible obstruction have precisely three preimages, and so a similar argument to that outlined above shows that the obstruction must contain a Levy cycle.
 
 By assumption, the quadratic-like case occurs when there is a disk component $D$ containing two critical values of $F$ which are not fixed critical points of $F$, and such that $F^{-1}(D)$ contains a non-disk component. By Lemma~\ref{l:RH}, the preimage of this disk is an annulus (which contains two critical points) and a disk. The annulus has two complementary regions. One of these, $U_1$, maps homeomorphically onto $S^2 \setminus D$, whilst the other, $U_2$, which contains both fixed critical points, maps as a degree $2$ branched covering to $S^2 \setminus D$, see Figure~\ref{f:quadlike}.
 
 \begin{figure}[ht]
 \input{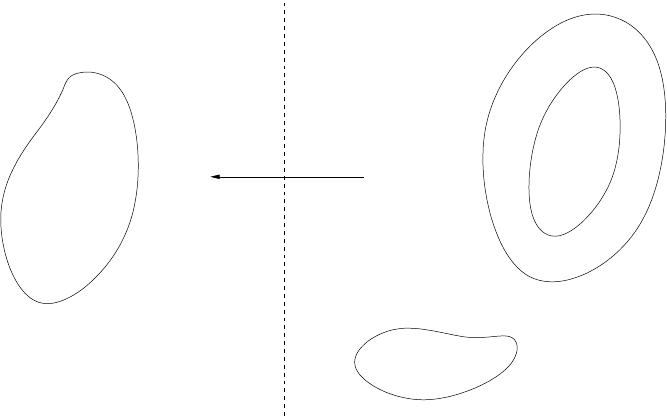_t}
 \caption{The disk $D$ and its preimages: the annulus $A$ and the disk $D'$. The region $U_1$ maps homeomorphically onto $S^2 \setminus D$ whereas the region $U_2$, which contains both fixed critical points, maps by a degree $2$ covering to $S^2 \setminus D$. In this and other figures, $\times$ denotes a critical value, $\ast$ a critical point and $\circledast$ a fixed critical point.}
 \label{f:quadlike}
 \end{figure} 
 
 The main theorem in this section is the following. Let $F$ be a cubic Thurston map with two fixed critical points.
 
 \begin{thm}\label{t:quadlike}
  Suppose $\Gamma$ is an irreducible obstruction of $F$ such that $\Sigma \setminus \Gamma$ has a disk component $D$ containing precisely two critical values, neither of which is a fixed critical point of $F$, and that $F^{-1}(D)$ contains a non-disk component. Then every curve in $\Gamma$ will have precisely three preimages. Moreover, $\Gamma$ must contain a Levy cycle.
 \end{thm}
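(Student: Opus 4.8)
The plan is to prove the two assertions in turn, the first being the substantive one. Throughout, write $\gamma_0=\partial D$, which is a single curve of $\Gamma$ since $D$ is a disk component, and let $c_1,c_2$ be the two fixed critical points of $F$. Recall from Lemma~\ref{l:RH}(a) that $F^{-1}(D)=A\sqcup D'$, with the two boundary curves of the annulus $A$ and the curve $\partial D'$ each mapping by degree $1$ onto $\gamma_0$; hence $F^{-1}(\gamma_0)$ has exactly three components, all of degree $1$. In this (quadratic-like) situation $F$ has exactly four critical values: $c_1,c_2$, which are fixed and so are their own images, together with the two ``free'' critical values lying in $D$.

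I would first show that \emph{every} $\gamma\in\Gamma$ has exactly three preimages, each of degree $1$. This is immediate for $\gamma=\gamma_0$. For $\gamma\ne\gamma_0$, cut $S^2$ along $\gamma$ into two disks $E_1\sqcup E_2$; by Lemma~\ref{l:sep}, $\gamma$ does not separate $c_1$ from $c_2$, so we may assume $c_1,c_2\in E_1$, and since $D$ is a component of $S^2\setminus\Gamma$ it lies entirely in one of $E_1,E_2$. If $D\subseteq E_1$, then $E_2$ contains none of the four critical values, so $F^{-1}(E_2)\to E_2$ is an unbranched degree-$3$ cover of a disk, hence trivial; its three disk components have boundary circles that are precisely the three (degree-$1$) preimages of $\gamma$. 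If $D\subseteq E_2$, put $T=E_2\setminus\overline{D}$, an annulus with $\partial T=\gamma\sqcup\gamma_0$ that contains no critical values; then $F^{-1}(T)\to T$ is an unbranched degree-$3$ cover of an annulus, so each of its components is an annulus mapping with some degree $d$ and having one boundary circle over $\gamma_0$ and one over $\gamma$, both of degree $d$. Since each component's $\gamma_0$-boundary circle is one of the three components of $F^{-1}(\gamma_0)$ --- all of which have degree $1$ --- every component of $F^{-1}(T)$ has degree $1$; as the degrees of these components sum to $3$, there are exactly three of them, and their three $\gamma$-boundary circles are precisely the three (degree-$1$) preimages of $\gamma$. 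I expect this last case to be the main obstacle: it is where the special quadratic-like structure of $F^{-1}(D)$, as carried by the already-understood set $F^{-1}(\gamma_0)$, must be transferred to an arbitrary curve of $\Gamma$, and the bookkeeping --- in particular, that each component of $F^{-1}(T)$ meets $F^{-1}(\gamma_0)$ in exactly one circle, and that $F^{-1}(\gamma)$ consists of exactly the $\gamma$-side boundary circles of $F^{-1}(T)$ --- needs to be done carefully.

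For the second assertion, form the directed graph on the vertex set $\Gamma$ with an edge $\delta\to\delta'$ whenever $\delta$ is isotopic rel $P_F$ to a component of $F^{-1}(\delta')$. Irreducibility of $F_\Gamma$ says precisely that this graph is strongly connected, so it contains a directed cycle, from which we extract a simple cycle $\delta_1\to\delta_2\to\cdots\to\delta_m\to\delta_1$ with the $\delta_i$ distinct. By the first part, for each $i$ the component of $F^{-1}(\delta_{i+1})$ isotopic to $\delta_i$ maps by degree $1$, i.e.\ homeomorphically onto $\delta_{i+1}$; comparing with the definition, $\{\delta_1,\ldots,\delta_m\}$ is a Levy cycle, and it is contained in $\Gamma$, which completes the proof.
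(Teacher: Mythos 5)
Your proof is correct, and it follows essentially the same strategy as the paper: use Lemma~\ref{l:sep} to rule out curves separating the fixed critical points, count preimages to conclude every curve in $\Gamma$ has three degree-$1$ preimages, and then extract a cycle in the transition digraph to get a Levy cycle. The place where you add genuine content is the case $D\subseteq E_2$: the paper's proof (via the first case of Lemma~\ref{l:sep}) invokes Lemma~\ref{l:RH}(a) on the side $B=E_1$ containing the two fixed critical points, but that lemma carries the hypothesis that $F^{-1}(B)$ has a non-disk component, which is not verified explicitly. Your argument with the unbranched annulus $T=E_2\setminus\overline{D}$ sidesteps this entirely: since every component of $F^{-1}(T)$ is an annulus whose $\gamma_0$-side boundary is one of the three degree-$1$ circles over $\gamma_0$, every component has degree $1$, forcing exactly three degree-$1$ circles over $\gamma$. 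This transfers the already-established structure over $\partial D$ to the arbitrary curve $\gamma$ rather than re-running Riemann--Hurwitz, and in doing so it closes the small gap in the paper's justification. The graph-theoretic extraction of the Levy cycle at the end is the same argument the paper gestures at with ``any periodic cycle (under $F^{-1}$) in $\Gamma$ must be a Levy cycle.''
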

 
 We now prove some preliminary results to allow us to prove this theorem.
 
 \begin{lem}\label{l:sep}
 Let $\Gamma$ be an irreducible obstruction of $F$ such that $\Sigma \setminus \Gamma$ has a disk component $D$ containing precisely two critical values, neither of which is a fixed critical point of $F$, and that $F^{-1}(D)$ contains a non-disk component. Then $\Gamma$ does not contain any curves which separate the two fixed critical points.
 \end{lem}

 \begin{proof}  
  We will call the critical values which are not the fixed critical points \emph{free} critical values. Since the two free critical values are contained in a disk component $D$ of $S^2 \setminus \Gamma$, no curve in $\Gamma$ can separate them. Define
  \[
   \widetilde{\Gamma} = \{ \gamma \in \Gamma \, \mid \, \gamma \text{ separates the two fixed critical points of } F \}.
  \]
Observe that $\Gamma \setminus \widetilde{\Gamma}$ is non-empty since $\partial D$ does not separate the two fixed critical points. Now suppose $\gamma \in \Gamma \setminus \widetilde{\Gamma}$. There are two possible cases, see Figure~\ref{f:newtonproof}. 
\begin{itemize}
 \item[(i)] {Each component of $\Sigma \setminus \gamma$ contains two critical values. Furthermore, one component must contain both fixed critical points.}
 \item[(ii)] {One component of $\Sigma \setminus \gamma$ contains no critical values.}
\end{itemize}

\begin{figure}[ht]
 \input{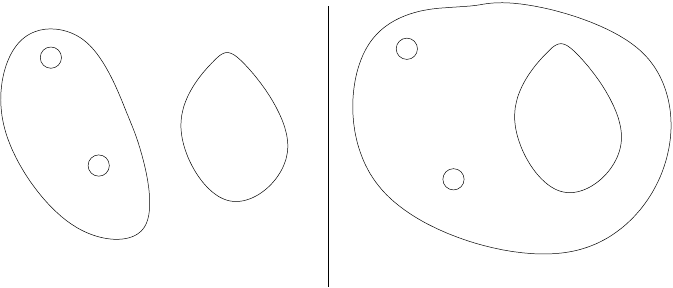_t}
 \caption{The two possible cases for the curve $\gamma$ in Lemma~\ref{l:sep}. One the left, each component of $\Sigma \setminus \gamma$ contains two critical values. On the right, one component of $\Sigma \setminus \gamma$ contains no critical values.}
 \label{f:newtonproof}
 \end{figure} 

 In the first case, denote the component of $\Sigma \setminus \gamma$ containing the two fixed critical points by $B$. By an application of Lemma~\ref{l:RH}, we see that $F^{-1}(B)$ consists of an annulus $A$ and a disk $B'$. The disk is a subset of the region $U_1$ and the annulus is contained in the region $U_2$; furthermore, the annulus contains both of the fixed critical points of $F$. The components of $F^{-1}(\gamma)$ are the curve $\partial B'$ and the two curves of $\partial A$ (see Figure~\ref{f:newtonproof1}). It follows that none of the components of $F^{-1}(\gamma)$ separate the two fixed critical points -- in other words, none of the preimage curves is in $\widetilde{\Gamma}$. 
 
 \begin{figure}[ht]
 \input{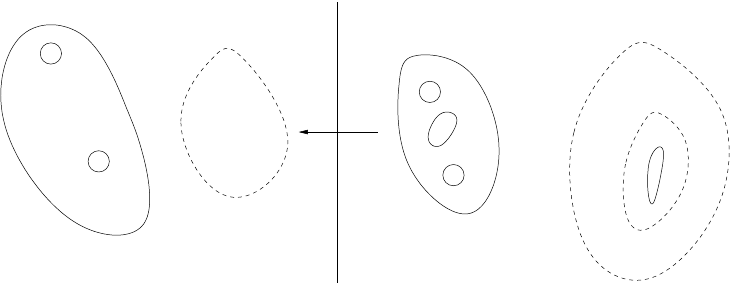_t}
 \caption{The first case in Lemma~\ref{l:sep}. The curve $\gamma$ and its preimages are unbroken lines, and the curve $\partial D$ and some of its preimages are broken lines. Note that none of the preimages of $\gamma$ separates the two fixed critical points.}
 \label{f:newtonproof1}
 \end{figure} 
 
 In the second case, denote the component of $\Sigma \setminus \gamma$ which contains no critical values by $C$. Since $C$ contains no critical values, the set $F^{-1}(C)$ consists of three disks, none of which contains any critical points. It follows that none of the preimages of $\gamma$ can separate the two fixed critical points (see Figure~\ref{f:newtonproof2}), and so no curve in $F^{-1}(\gamma)$ can belong to $\widetilde{\Gamma}$.
 
 \begin{figure}[ht]
 \input{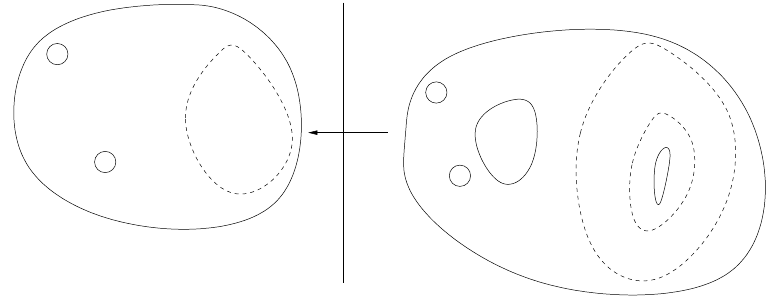_t}
 \caption{The second case in Lemma~\ref{l:sep}. Again, the curve $\gamma$ and its preimages are unbroken lines, and the curve $\partial D$ and some of its preimages are broken lines. As with the first case, none of the preimages of $\gamma$ separate the two fixed critical points.}
 \label{f:newtonproof2}
 \end{figure} 
 
 It follows that any irreducible obstruction containing a curve in $\Gamma \setminus \widetilde{\Gamma}$ cannot also contain a curve in $\widetilde{\Gamma}$. Since by assumption $\partial D \in \Gamma$ and $\partial D \in \Gamma \setminus \widetilde{\Gamma}$, we see that our irreducible obstruction cannot contain any curves which separate the two fixed critical points.
 \end{proof}

We are now ready to prove Theorem~\ref{t:quadlike}.
 
 \begin{proof}[Proof of Theorem~\ref{t:quadlike}]
  By Lemma~\ref{l:sep}, no curve in the irreducible obstruction $\Gamma$ can separate the two fixed critical points. It follows that if $\gamma \in \Gamma$, then (using Lemma~\ref{l:RH} part (a)) the set $F^{-1}(\gamma)$ will consist of three components, and so each preimage will map homeomorphically onto $\gamma$. Since $\Gamma$ contains only finitely many curves, any periodic cycle (under $F^{-1}$) in $\Gamma$ must be a Levy cycle.
 \end{proof}
 
 \begin{rem}
  It is perfectly possible that the Levy cycle found above is degenerate but not removable. An example is given in the appendix.
 \end{rem}

 \subsection{Newton-like case}
 
 This case is very similar to that considered by Tan in \cite{Tan:Newton}. There it was shown that if a cubic Thurston map with three fixed critical points has an obstruction, then this obstruction contains a Levy cycle. The present paper generalises Tan's result to the case where there are only two fixed critical points. In this section, we will assume that we are not in the quadratic-like case. That is, we assume that there does not exist a disk component of $\Sigma \setminus \Gamma$ containing both free critical values and whose preimage contains a non-disk component.  The main result for the Newton-like case is the following.
 
 \begin{thm}\label{t:Newton}
 Let $F$ be a cubic Thurston map with two fixed critical points, and let $\Gamma$ be an irreducible obstruction of $F$ such that $\Sigma \setminus \Gamma$ contains a disk component $D$ that has a preimage which is a non-disk component. If furthermore $D$ contains a fixed critical point, then $\partial D \in \Gamma$ and $\{ \partial D \}$ is a Levy cycle.
\end{thm}

The proof of this theorem needs a little care. In some sense, the most important case (at least when it comes to considering matings) is when $D$ contains precisely two critical values of $F$. However, we first turn our attention to the case where $D$ contains more than two critical values. In this case, it follows from Lemma~\ref{l:RH} and Proposition~\ref{p:isotopiccontain} that the disk component $D$ has exactly one preimage and that this preimage is isotopically contained in $D$ relative to $P_F$ (see Proposition~\ref{p:pDisLevy}). The following lemma is the key observation.

\begin{lem}\label{l:UinD1}
 Let $\Gamma$ be an irreducible obstruction for a postcritically finite branched covering $F$. Suppose $U$ is a preimage component of a disk component $D$ in $\Sigma \setminus \Gamma$ and $U$ is isotopically contained in $D$ relative to $P_F$. Then if an element of $\partial U$ is an element of $\Gamma$, it is isotopic to $\partial D$.
\end{lem}

\begin{proof}
 Let $\gamma' \in \partial U$ be an element of $\Gamma$. Since $U \cap P_F \subseteq D \cap P_F$, we see that $\gamma' \subseteq \overline{D}$ (isotopically). But this means that if $\gamma'$ is not isotopic to $\gamma$, then $D$ is not a disk component of $\Sigma \setminus \Gamma$, which is a contradiction. 
\end{proof}

In the case where $D$ contains more than two critical values, the following immediate corollary will be useful.

\begin{cor}\label{c:UinD}
 Let $\Gamma$ be an irreducible obstruction for a Thurston map $F$. Suppose $U$ is the only preimage component of a disk component $D$ and that $U$ is isotopically contained in $D$ relative to $P_F$. Then there exists a curve in $\partial U$ which is isotopic to $\partial D$.
\end{cor}

\begin{proof}
 First observe that since $\Gamma$ is an irreducible obstruction, the curve $\gamma = \partial D$ in $\Gamma$ must have a preimage (up to isotopy) in $\Gamma$. Let $\gamma'$ be such a curve. By Lemma~\ref{l:UinD1}, $\gamma'$ is isotopic to $\gamma$. Thus there exists a preimage of $\gamma$ which is isotopic to $\gamma$.
\end{proof}

We now may take care of the case where $D$ contains more than two critical values.

\begin{prop}\label{p:pDisLevy}
 Let $F$ be a cubic Thurston map with two fixed critical points, and let $\Gamma$ be an irreducible obstruction of $F$ such that $\Sigma \setminus \Gamma$ contains a disk component $D$ which contains at least three critical values. Then $\{ \partial D \}$ is a Levy cycle.  
 \end{prop}
 
 \begin{proof}
 We claim that if $D$ contains three or four critical values, then it has exactly one preimage component $U$ and $U$ is isotopically contained in $D$ relative to $P_F$. By considering complements in Lemma~\ref{l:RH}, we see that if $D$ contains three or four critical values, then $F^{-1}(D)$ consists of just one component, $U$. Furthermore, since $D$ contains more than two critical values, at least one of them must be a fixed critical point $c$. Since $c \in D$ and $c \in U$, we see by Proposition~\ref{p:isotopiccontain} that $U$ is isotopically contained in $D$ relative to $P_F$. It then follows from Corollary~\ref{c:UinD} that there exists a curve in $F^{-1}(\partial D)$ which is isotopic to $\partial D$. We now split into cases. Denote $\gamma = \partial D$.
 
  If $D$ contains exactly three critical values, then by Lemma~\ref{l:RH} we see that $\gamma$ has two preimages, $\gamma'$ and $\gamma''$ which map by degree $1$ and degree $2$ respectively onto $\gamma$. Note that in particular, $\gamma''$ bounds the component of $\Sigma \setminus U$ which contains the unique critical point of $F$ which is not contained in $D$ (see Figure~\ref{f:newton2}).
  
  \begin{figure}[ht]
  \input{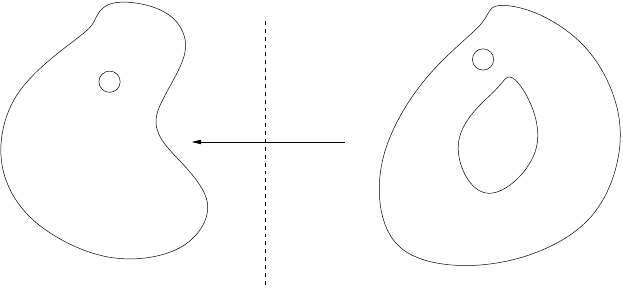_t}
  \caption{The case where $D$ contains three critical values and so has one preimage component $U$. Here the curve $\gamma = \partial D$ has two preimages, $\gamma'$ and $\gamma''$ which map onto $\gamma$ by degree $1$ and $2$ respectively.}
  \label{f:newton2}
  \end{figure}
  
  If $\gamma'$ is not in $\Gamma$, then $\gamma''$ must be isotopic to $\gamma$. But then $\Gamma' = \{ \gamma \}$ is an irreducible submulticurve of $\Gamma$,  with $\lambda(\Gamma') = 1/2$. It follows that $\Gamma$ is not an irreducible obstruction, which is a contradiction. Thus $\gamma'$ is in $\Gamma$ and so must be isotopic to $\gamma$ by Lemma~\ref{l:UinD1}. Since $\gamma'$ maps homeomorphically onto $\gamma$, we then see that $\{ \gamma \}$ is a Levy cycle. 
  
  If $D$ contains exactly four critical values, then by Lemma~\ref{l:RH}, then $\gamma$ has three preimages, each of which maps homeomorphically onto $\gamma$. At least one of these is isotopic to $\gamma$ by Corollary~\ref{c:UinD}, and so again $\{ \gamma \}$ is a Levy cycle.
 \end{proof}

 We now turn our attention to the most important case (at least with regards to the sequel to this paper), where the disk $D$ contains precisely two critical values, at least one of which is a fixed critical point $c$.

 \begin{lem}\label{l:partialA}
  Let $F$ be a cubic Thurston map with two fixed critical points, and let $\Gamma$ be an irreducible obstruction of $F$. Suppose there is a disk component $D$ of $\Sigma \setminus \Gamma$ such that
  \begin{itemize}
   \item $F^{-1}(D)$ contains a non-disk component and
   \item $D$ contains precisely two critical values, at least one of which is a fixed critical point $c$ of $F$.
  \end{itemize}
Then $F^{-1}(D)$ contains an annulus $A$ which is isotopically contained in $D$ relative to $P_F$, and one of the curves in $\partial A$ is in $\Gamma$. 
 \end{lem}
 
 \begin{proof}
  By Lemma~\ref{l:RH}, the components of $F^{-1}(D)$ are an annulus $A$, and a disk $\Delta$ which contains no critical points. Furthermore, since $c \in A$, it follows from Proposition~\ref{p:isotopiccontain} that $A$ is isotopically contained in $D$ relative to $P_F$ (see Figure~\ref{f:newton}). To obtain a contradiction, suppose neither curve in $\partial A$ is in $\Gamma$. Then $\partial \Delta$ must be in $\Gamma$. 
  
  \begin{figure}[ht]
  \input{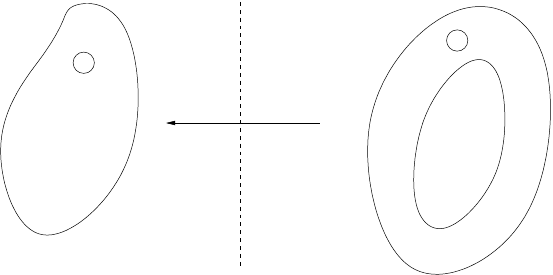_t}
  \caption{When the disk contains two critical values, one of which is fixed, then there exists an annular component $A$ in $F^{-1}(D)$ which is isotopically contained in $D$ relative to $P_F$.}
  \label{f:newton}
  \end{figure}
  
  Firstly, suppose that there exists a disk component $\Delta'$ of $F^{-n}(D)$ such that $F^{-1}(\Delta')$ contains a non-disk component. Then by Lemma~\ref{l:RH}, $\Delta'$ must contain two critical values. Since $\Delta'$ does not contain a fixed critical point (for otherwise, by taking forward iterates, we would see that $\Delta$ would also contain a fixed critical point, which is a contradiction) we see that $\Delta'$ is disjoint from $D$ and that $\Delta'$ contains the two free critical values. Since $F^{-1}(\Delta')$ contains a non-disk component, we see that we are in the quadratic-like case, contradicting our assumption at the start of the section. Thus all components of $F^{-n}(D)$ must be disks. But then by Proposition~\ref{p:critvals}, $\Gamma$ contains a removable Levy cycle $L$. However, $\partial D \notin L$ (since $D$ contains critical points), which means that $\Gamma$ is not irreducible. This is a contradiction. Hence there must exist a curve in $\partial A$ which is in $\Gamma$. 
 \end{proof}

Combining Lemma~\ref{l:partialA} with Lemma~\ref{l:UinD1} gives the following.
 
 \begin{prop}\label{p:2critvcase}
  Let $F$ be a cubic Thurston map with two fixed critical points, and let $\Gamma$ be an irreducible obstruction of $F$. Suppose there is a disk component $D$ of $\Sigma \setminus \Gamma$ such that
  \begin{itemize}
   \item $F^{-1}(D)$ contains a non-disk component and
   \item $D$ contains precisely two critical values, at least one of which is a fixed critical point $c$ of $F$.
  \end{itemize}
Then $\{ \partial D \}$ is a Levy cycle.
 \end{prop}

 \begin{proof}
  By Lemma~\ref{l:partialA}, $F^{-1}(D)$ contains an annulus which is isotopically contained in $D$ relative to $P_F$. Therefore, by Lemma~\ref{l:UinD1}, we see that if an element $\gamma'$ of $\partial A$ is in $\Gamma$, then it is isotopic to $ \gamma = \partial D$. Since $F^{-1}(\gamma)$ consists of three components, we see that $\gamma'$ maps homeomorphically onto $\gamma$ and so $\{ \gamma \}$ is a Levy cycle.  
  \end{proof}

  The proof of Theorem~\ref{t:Newton} now readily follows.
 
 \begin{proof}[Proof of Theorem~\ref{t:Newton}.]
  Since $D$ is a disk component of $\Sigma \setminus \Gamma$ such that $F^{-1}(D)$ contains a non-disk component, it must contain at least two critical values, at least one of which (by assumption) is a fixed critical point. If $D$ contains more than two critical values, then by Proposition~\ref{p:pDisLevy}, the multicurve $\{ \partial D \}$ is a Levy cycle. If $D$ contains exactly two critical values, then by Proposition~\ref{p:2critvcase}, either $\{ \partial D \}$ is a Levy cycle or $\Gamma$ contains a degenerate Levy cycle.
 \end{proof}

 \subsection{Proof of Main Theorem \ref{mthm}}
 
 The proof of Main Theorem~\ref{mthm} is now immediate.
 
 \begin{proof}[Proof of Main Theorem~\ref{mthm}]
  Let $\Gamma$ be an irreducible obstruction and suppose $\Gamma$ is not a removable Levy cycle. By Proposition~\ref{p:notremov}, we must therefore be in the Newton-like case or the quadratic-like case. If we are in the Newton-like case, Theorem~\ref{t:Newton} asserts that $\Gamma$ must be a Levy cycle containing just one curve. On the other hand, if we are in the quadratic-like case, then it follows that Theorem~\ref{t:quadlike} that $\Gamma$ must be a Levy cycle.
 \end{proof}

 \subsection{Matings and the Proof of Main Theorem \ref{mthm2}}
 
 In this section, we outline the general theory of matings and give a proof of Main Theorem \ref{mthm2}. In actual fact, most of the work has already been carried out in Main Theorem~\ref{mthm}. The reader interested in finding out more details should refer to \cite{Milnor:mating,NotionsofMating,ShishTan,Tan:quadmat}. Let $f$ and $g$ be monic degree $d$ polynomials. We define
\[
 \widetilde{\C} = \C \cup \{ \infty \cdot e^{2 \pi i t} : t \in \R / \Z \},
\]
the complex plane compactified with the circle of directions at infinity. We then continuously extend the two polynomials to the circle at infinity by defining
\[
  f(\infty \cdot e^{2 \pi i t}) =  \infty \cdot e^{2 d \pi i t} \quad \text{and} \quad g(\infty \cdot e^{2 \pi i t}) =  \infty \cdot e^{2 d \pi i t}.
\]
Label this extended dynamical plane of $f$ (respectively $g$) by $\widetilde{\C}_f$ (respectively $\widetilde{\C}_g$). We create a topological $2$-sphere $\Sigma_{f,g}$ by gluing the two extended planes together along the circle at infinity:
\[
 \Sigma_{f,g} = \widetilde{\C}_f \uplus \widetilde{\C}_g / \sim
\]
where $\sim$ is the relation which identifies the point $\infty \cdot e^{2 \pi i t} \in \widetilde{\C}_f$ with the point $\infty \cdot e^{- 2 \pi i t} \in \widetilde{\C}_g$. The \emph{formal mating} is then defined to be the branched covering $f \uplus g \colon \Sigma_{f,g} \to \Sigma_{f,g}$ such that
\begin{align*}
    f \uplus g|_{\widetilde{\C}_f} \, =& \, f \quad \textrm{and} \\
    f \uplus g|_{\widetilde{\C}_g} \, =& \, g.
\end{align*}  
 
 \begin{proof}[Proof of Main Theorem~\ref{mthm2}].
 Observe that if $f$ and $g$ are monic postcritically finite polynomials each possessing a fixed critical point, then the formal mating $f \uplus g \colon \Sigma_{f,g} \to \Sigma_{f,g}$ is a postcritically finite branched covering of a topological $2$-sphere which has (at least) two fixed critical points. By Main Theorem~\ref{mthm}, any obstruction to this mating must contain a Levy cycle.  
 \end{proof}
 
 \appendix
 
 \section{Examples of Obstructions}
 
 We conclude this article by giving examples of the types of obstructions given in this article. All of these examples will be constructed as (formal) matings $f \uplus g$ of pairs of polynomials in $\mathcal{S}_1$, the space of cubic polynomials with a fixed marked critical point. The polynomials in $\mathcal{S}_1$ may be put in the normal form
 \[
  f_a(z) = z^3 - 3a^2z+ 2a^3 + a
 \]
 with the fixed critical point at $a$ and the free critical point at ${-}a$. Thus one may identify the polynomial $f_a$ with the point $a \in \C$. For more details of this space (and other slices of the space of cubic polynomials) the reader is referred to \cite{CP1,CP3}. 
 
 Before we begin, we describe the polynomials we will use to construct the examples. Their positions in $\mathcal{S}_1$ are marked in Figure~\ref{f:S1arrows}. (Filled) Julia sets of these maps, with important rays marked, are in Figure~\ref{f:Juliasets}.
 
 \begin{itemize}
  \item Let $f_0$ be the polynomial which is the landing point of the parameter ray of angle $0$. For $f_0$, we have that the free critical point $-a$  maps onto a repelling fixed point $v$ after one iterate. The corresponding parameter value is $a_0 = 0.5$. 
  \item Let $f_1$ be the centre of the hyperbolic component whose root is the landing point of the rays of angles $1/24$ and $2/24$. In this case the free critical point $-a$ belongs to a period $2$ cycle. The corresponding parameter is $a_1 = 0.500869\dotsc + 0.259677\dotsc i$.
  \item Let $f_2 = \overline{f_1}$ be the complex conjugate of $f_1$ (so that $f_2(z) = \overline{f_1(z)}$ for all $z \in \C$). 
  \item Let $f_3 = -f_2 = - \overline{f_1}$.
 \end{itemize}

 \begin{figure}[ht]
  \centering
  \includegraphics[width=0.6\textwidth]{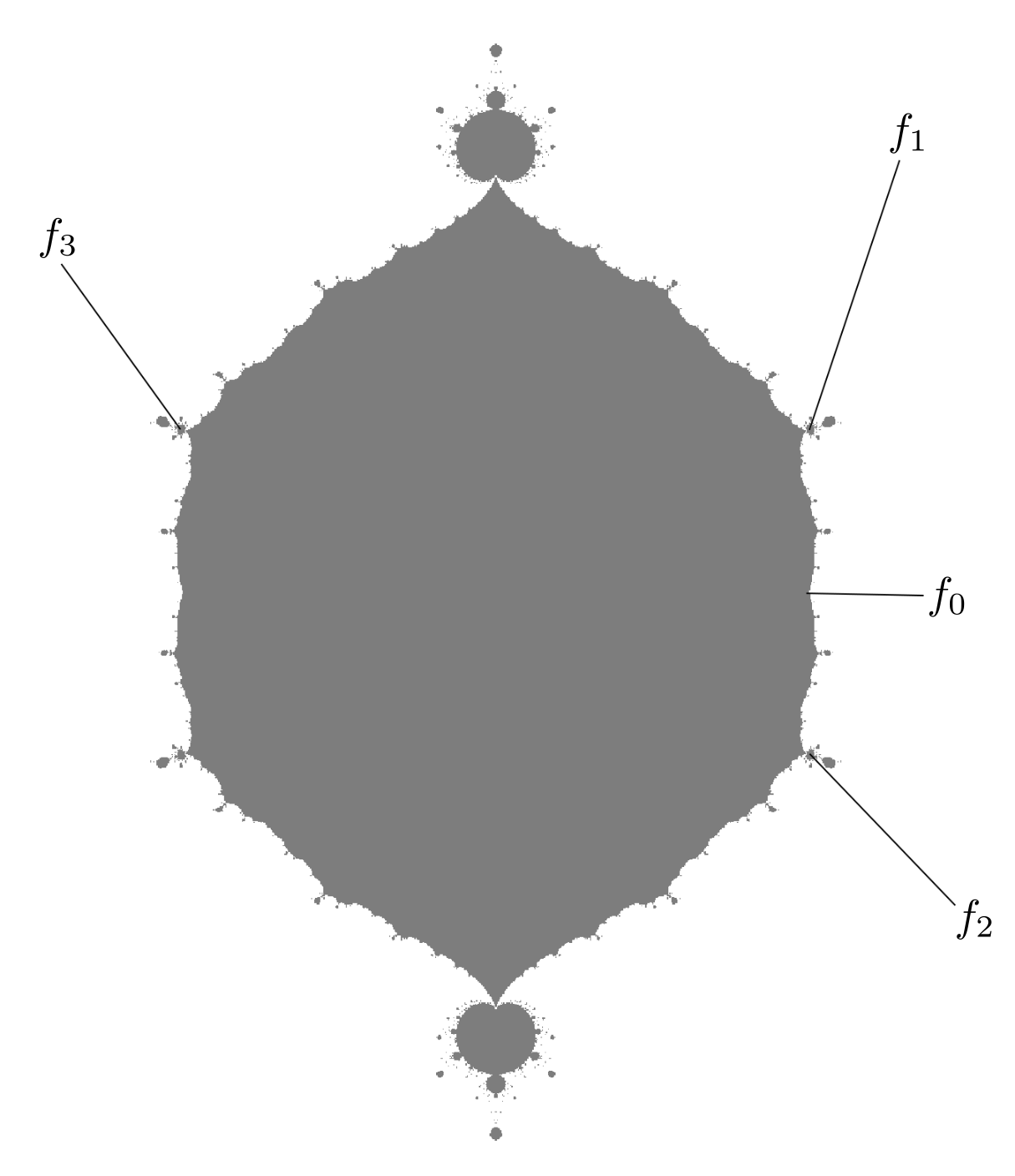}
  \caption{The parameter space $\mathcal{S}_1$ with the positions of $f_i$, $i=0,\dotsc3$ marked. \label{f:S1arrows}}
 \end{figure}

 \begin{figure}
  \centering
  \begin{subfigure}[h]{0.4\textwidth}
   \includegraphics[width=\textwidth]{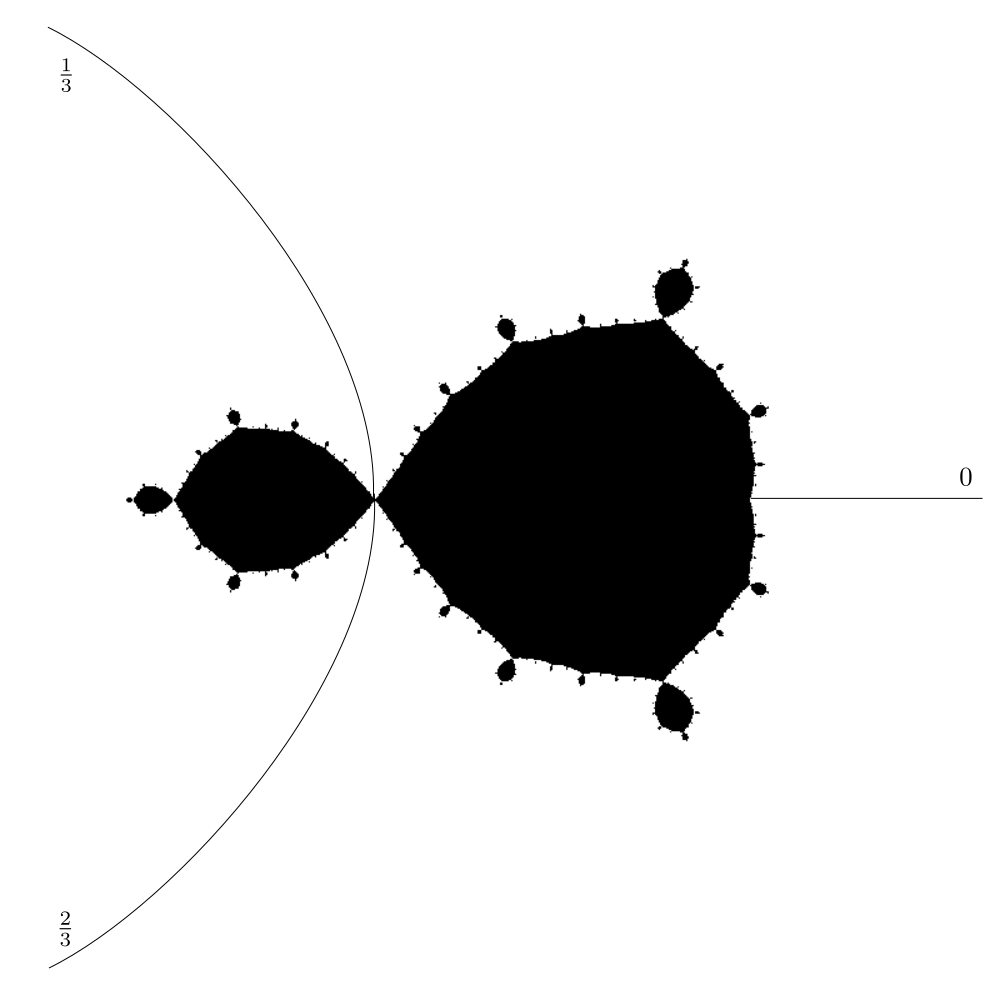}
   \caption{The Julia set for $f_0$}
  \end{subfigure}
\quad
\begin{subfigure}[h]{0.4\textwidth}
   \includegraphics[width=\textwidth]{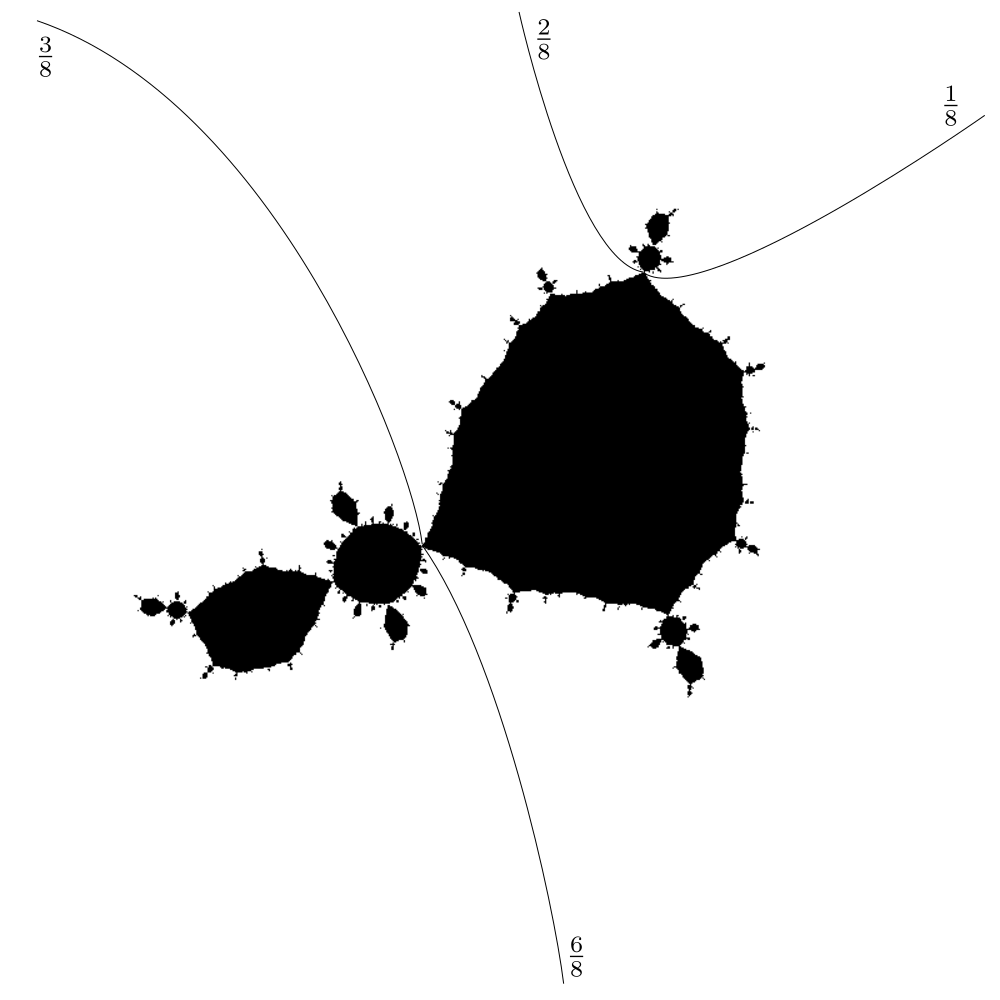}
   \caption{The Julia set for $f_1$}
  \end{subfigure}
  \\
  \begin{subfigure}[h]{0.4\textwidth}
   \includegraphics[width=\textwidth]{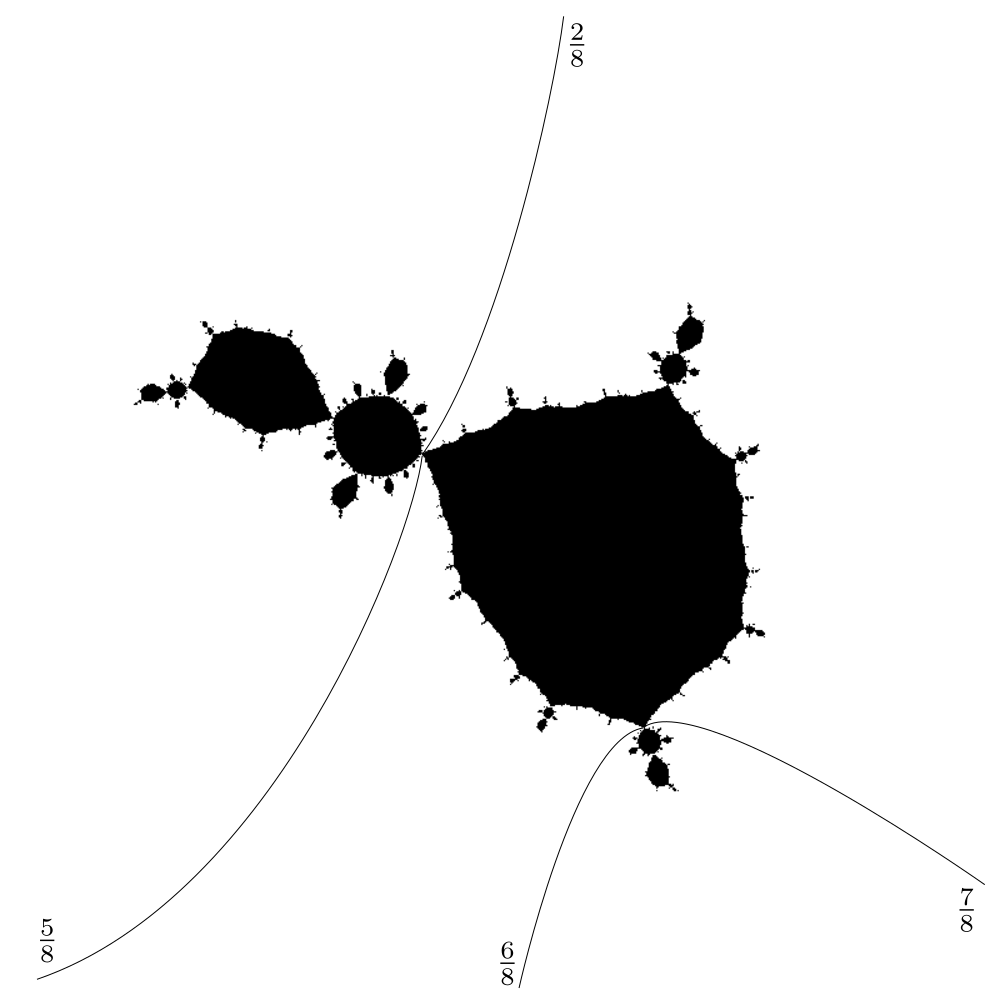}
   \caption{The Julia set for $f_2$}
  \end{subfigure}
\quad
\begin{subfigure}[h]{0.4\textwidth}
   \includegraphics[width=\textwidth]{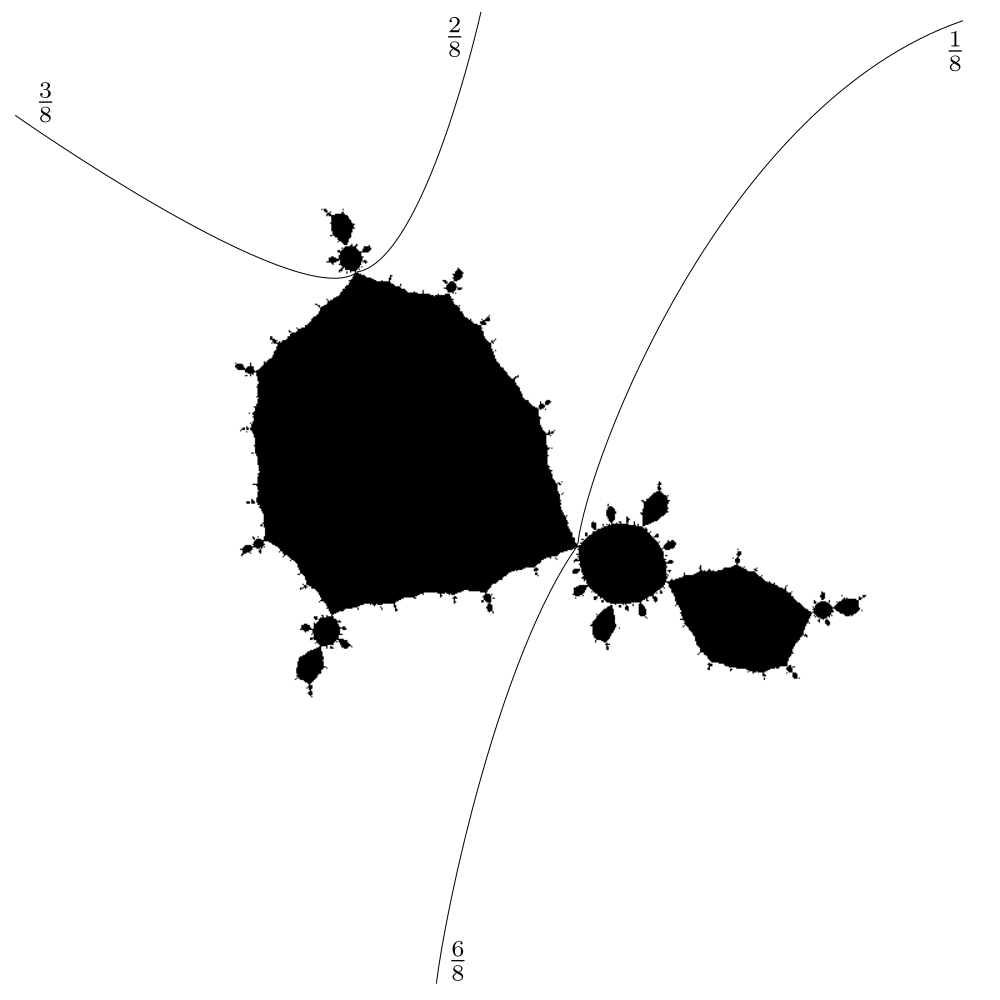}
   \caption{The Julia set for $f_3$}
  \end{subfigure}
  \caption{Filled Julia sets for the polynomials $f_i$, $i=0,\dotsc3$.}
  \label{f:Juliasets}
 \end{figure}

 We will denote the dynamical ray of angle $t$ for the map $f$ (respectively $g$) by $R_f(t)$ (respectively $R_g(t)$). In the formal mating $F = f \uplus g$, we also denote by $R_F(t)$ the set $\overline{R_f(t) \cup R_g(-t)}$.
 
 \subsection{Quadratic-like case}
 
 In this section there are two examples. The second example is noteworthy since it constructs a degenerate Levy cycle which is not removable.
 
 \begin{ex}
  Let $f = f_1$ and $g=f_2$ and $F = f \cup g$. Let
  \[
   \gamma_1 = R_F(1/8) \cup R_F(2/8) \qquad \text{and} \qquad \gamma_2 = R_F(3/8) \cup R_F(6/8).
  \]
It is easy to see that $F$ maps $\gamma_1$ homeomorphically onto $\gamma_2$ and $\gamma_2$ homeomorphically onto $\gamma_1$. Furthermore, $\gamma_1$ bounds a disk component which contains two critical values, neither of which is fixed. Hence, we are in the quadratic-like case. 
 \end{ex}

 \begin{ex}
  Consider the formal mating $F = f \uplus g$ where $f = g = f_0$. Then if $\gamma$ is a tubular neighbourhood of $R_F(0)$, then $\Gamma = \{ \gamma \}$ is a degenerate Levy cycle. Furthermore, $\gamma$ bounds a disk region $D$ which contains the two critical values of $F$ which are not fixed critical points, and so we are in the quadratic-like case. Since the preimage of $D$ consists of two components, one isotopic to $D$ and the other an annulus, we see that $\Gamma$ is a degenerate Levy cycle but not a removable Levy cycle.
 \end{ex}

 \subsection{Newton-like Case}
 
 \begin{ex}
  Consider the formal mating $F = f_1 \uplus f_3$. In this case, observe that
  \[
   \gamma = R_F(1/8) \cup R_F(2/8) \cup R_F(3/8) \cup R_F(6/8)
  \]
forms a closed curve and $F \colon \gamma \to \gamma$ is a homeomorphism. Since $\gamma$ separates the two fixed critical points (and so each disk component of the complement of $\gamma$ contains a fixed critical point), we see that the Levy cycle $\Gamma = \{ \gamma \}$ belongs to the Newton-like case.
 \end{ex}

\bibliographystyle{alpha}
\bibliography{cubicobs}

\begin{thebibliography}{{Tan}97}

\bibitem[AR16]{AspRoe}
M.~Aspenberg and P.~Roesch.
\newblock Newton maps as matings of cubic polynomials.
\newblock {\em Proc. Lond. Math. Soc. (3)}, 113(1):77--112, 2016.

\bibitem[BFH92]{BFH}
B.~Bielefield, Y.~Fisher, and J.~Hubbard.
\newblock The classification of critically preperiodic polynomials as dynamical
  systems.
\newblock {\em Journal AMS}, 5:721--762, 1992.

\bibitem[BM17]{BonkMeyer}
M.~Bonk and D.~Meyer.
\newblock {\em Expanding {T}hurston maps}, volume 225 of {\em Mathematical
  Surveys and Monographs}.
\newblock American Mathematical Society, Providence, RI, 2017.

\bibitem[BM18]{CP3}
A.~Bonifant and J.~W. Milnor.
\newblock Cubic polynomials maps with periodic critical point, {Part III}:
  {External} rays.
\newblock Manuscript, in preparation, 2018.

\bibitem[DH93]{DouadyHubbard:Thurston}
A.~Douady and J.~H. Hubbard.
\newblock A proof of {Thurston's} topological characterization of rational
  functions.
\newblock {\em Acta. Math.}, 171:263--297, 1993.

\bibitem[Gan59]{Gantmacher}
F.~R. Gantmacher.
\newblock {\em {The Theory of Matrices}}.
\newblock Chelsea, 1959.

\bibitem[Mil04]{Milnor:mating}
J.~W. Milnor.
\newblock Pasting together {J}ulia sets: a worked out example of mating.
\newblock {\em Experiment. Math.}, 13:55--92, 2004.

\bibitem[Mil06]{MilnorComplex}
J.~W. Milnor.
\newblock {\em Dynamics in One Complex Variable}.
\newblock Princeton University Press, 3rd edition, 2006.

\bibitem[Mil09]{CP1}
J.~W. Milnor.
\newblock Cubic polynomial maps with periodic critical orbit. {I}.
\newblock In {\em Complex dynamics}, pages 333--411. A K Peters, Wellesley, MA,
  2009.

\bibitem[MP12]{NotionsofMating}
D.~Meyer and C.~L. Petersen.
\newblock On the notions of mating.
\newblock {\em Ann. Fac. Sci. Toulouse Math. (6)}, 21(5):839--876, 2012.

\bibitem[Sha20]{CubicmatingsPt2}
T.~Sharland.
\newblock Matings of cubic polynomials with a fixed critical point, {Part II}:
  Topological obstructions.
\newblock In preparation, 2020.

\bibitem[ST00]{ShishTan}
M.~Shishikura and {Tan~L.}
\newblock A family of cubic rational maps and matings of cubic polynomials.
\newblock {\em Experimental Math.}, 9:29--53, 2000.

\bibitem[{Tan}92]{Tan:quadmat}
{Tan~L.}
\newblock Matings of quadratic polynomials.
\newblock {\em Ergodic Th. Dyn. Sys.}, 12:589--620, 1992.

\bibitem[{Tan}97]{Tan:Newton}
{Tan~L.}
\newblock Branched coverings and cubic {Newton} maps.
\newblock {\em Fund. Math.}, 154:207--260, 1997.

\end{thebibliography}
 
\end{document}